\documentclass[10pt]{amsart}
\usepackage[english]{babel}
\usepackage{fancyhdr}
\usepackage{stmaryrd}
\usepackage{calrsfs}
\usepackage{amsmath}
\usepackage[nospace,noadjust]{cite}
\usepackage{amsfonts}
\usepackage{amssymb,enumerate,enumitem}
\usepackage{amsthm}
\usepackage{cite}
\usepackage{comment}
\usepackage{color}
\usepackage[all]{xy}
\usepackage{hyperref}
\usepackage{lineno}
\usepackage{mathtools}
\usepackage{tikz-cd}
\usepackage{xy}
\input xy
\xyoption{all}
\usepackage{stmaryrd}
\usepackage{calrsfs}
\usepackage{caption}

\def\C{\mathbb{C}}
\def\Q{\mathbb{Q}}
\def\R{\mathbb{R}}
\def\N{\mathbb{N}}

\def\Pr{{\rm Prin}}

\DeclareMathOperator{\Supp}{Supp}

\DeclareMathOperator{\lc}{lc}
\DeclareMathOperator{\ord}{ord}

\providecommand\ldb{\llbracket}
\providecommand\rdb{\rrbracket}
\newtheorem{Th}{Theorem}[section]
\newtheorem{Lm}[Th]{Lemma}
\newtheorem{Pro}[Th]{Proposition}
\newtheorem{Co}[Th]{Corollary}
\newtheorem{Ep}[Th]{Example}
\newtheorem{Df}[Th]{Definition}
\newtheorem{rk}[Th]{Remark}

\usepackage{graphicx} 

\begin{document}

\setcounter{page}{1}

\title{On Integral Domains with Prime Divisor Finite Property}

\author{Mohamed Benelmekki}

\address{Department of Mathematics, Facult\'e des Sciences et Techniques, Beni Mellal University, P.O. Box 523, Beni Mellal, Morocco}
\email{Mohamed Benelmekki:  med.benelmekki@gmail.com}
\subjclass[2020]{13A15, 13F15, 13E05, 13G05.}
\keywords{IDF-domain, TPDF-domain, Furstenberg domain, AP-domain.}
\date{\today}
\begin{abstract} An integral domain $D$ is called a \emph{prime-divisor-finite domain} (PDF-domain) if every nonzero element has only finitely many nonassociate prime divisors. A domain $D$ is said to be a \emph{tightly prime-divisor-finite domain} (TPDF-domain) if it is a PDF-domain and every nonzero nonunit element admits at least one prime divisor. In this paper, we study TPDF-domains. We investigate some basic properties of these domains and examine the behavior of the TPDF property under standard constructions such as localization, $D+M$ constructions, and polynomial rings.
\end{abstract}
\maketitle
\section{Introduction}

Factorization theory studies how elements of an integral domain decompose into irreducible elements and how far this behavior is from unique factorization. Even though unique factorization domains (UFDs) provide the strongest form of control, many domains fail to be UFDs but still satisfy weaker finiteness or divisibility conditions. Understanding these intermediate classes is a central problem in modern commutative algebra.

A natural way to measure the complexity of factorization is through finiteness conditions on divisors. An integral domain $D$ is called an \emph{IDF-domain} (irreducible-divisor-finite domain) if every nonzero nonunit element of $D$ has only finitely many irreducible divisors up to associates \cite{GW75}. Similarly, $D$ is called a \emph{PDF-domain} (prime-divisor-finite domain) if every nonzero nonunit element has only finitely many prime divisors up to associates \cite{MO}. These conditions prevent any element from having infinitely many essentially different irreducible or prime divisors, thus imposing strong constraints on the divisibility structure of the domain.\medskip

While finiteness controls the number of divisors, it does not guarantee their existence. For this reason, existence conditions also play an important role. Following P.L. Clark \cite{Cl17}, an integral domain $D$ is called a \emph{Furstenberg domain} if every nonzero nonunit element admits an irreducible divisor. Strengthening this notion, we say that $D$ is a \emph{strong Furstenberg domain} if every nonzero nonunit element admits a prime divisor.\medskip

Combining existence and finiteness conditions leads naturally to the notion of a \emph{tightly prime divisor finite domain} (TPDF-domain). A TPDF-domain is a strong Furstenberg domain that is also a PDF-domain. Equivalently, every nonzero nonunit element admits a prime divisor and has only finitely many prime divisors up to associates. The class of TPDF-domains is an important subclass of AP-domains (cf. Proposition~\ref{P1}), namely domains in which every irreducible element is prime. It naturally generalizes the class of UFDs, maintaining strong structural control over factorizations while allowing non-unique factorization. This property was introduced in \cite{BDKN}, where A.~Bodin et al. use the term ``near UFD'' for what we call a TPDF-domain, in the context of a version of Hilbert's Irreducibility Theorem over rings. In particular, it is shown there that near UFDs satisfy the coprime Schinzel Hypothesis in a version of Hilbert's Irreducibility Theorem over integral domains. \medskip

The aim of this paper is to study TPDF-domains and AP-domains. We investigate their fundamental properties and examine the behavior of the TPDF property under standard constructions  such as localization, $D+M$ constructions, and polynomial rings. The remainder of the paper is organized as follows. In Section~\ref{sec2}, we recall the necessary background and establish the notation used throughout the paper. We review several definitions and results concerning factorization properties in integral domains,  
and we present monoid domains as a central tool in our study. In Section~\ref{sec3}, we introduce TPDF-domains and study their basic properties. Section~\ref{sec4} investigates how the TPDF property behaves under the $D+M$ construction and localization.  
\section{Background}\label{sec2}
\subsection{General Notation}
We let $\mathbb{N}$, $\mathbb{Z}$, $\mathbb{Q}$, $\mathbb{R}$, and $\mathbb{C}$ denote the positive integers, the integers, the rational numbers, the real numbers, and the complex numbers, respectively. The sets $\mathbb{Z}_+$ 
 and  $\mathbb{Q}_+$ will denote the monoid of non-negative, integers and rational numbers, respectively. Throughout this paper, all monoids are assumed to be commutative, cancellative, torsion-free semigroups with identity.
\subsection{Factorization in integral domains}
			
Let $M$ be a monoid, written multiplicatively. An element $b \in M$ is \emph{unit} if there exists $c \in M$ such that $b c = 1$. We let $U(M)$ denote the group of unit elements of $M$, and we say that $M$ is reduced if $U(M)$ is trivial. When $M$ is the multiplicative monoid of an integral domain $D$, the unit elements of $M$ are precisely the units of $D$. \smallskip
			
	Let $D$ be an integral domain.	Let $a,b\in D$, we say that $b$ is divided by $a$ or $a$ divides $b$ if there exists $c\in D$ such that $b=ac$; in this case we can write $a|b$. We say that $a$ and $b$ are associate if  $a| b$ and $b|a$; we write $a\sim b$. A nonunit element $a\in D$ is called atom (or irreducible element) if $a=bc$ for some $b,c\in D$  implies that $b\in U(D)$ or $c\in U(D)$. A nonunit element $p \in D$ is called \emph{prime} if 
$p \mid ab$ implies $p \mid a$ or $p \mid b$ for all $a,b \in D$. We will note the set of atoms (resp., prime elements) of $M$ by $\mathcal{A}(D)$ (resp., $\mathcal{P}(D)$). \smallskip
			
			Following Cohn \cite{C68}, we say that a domain  $D$ is \emph{atomic} if every nonunit $a\in D$ can be expressed as a finite product of atoms in $D$. An atomic domain $D$ is called a \emph{bounded factorization domain} (BFD) if, for each nonzero nonunit element $a \in D$, there exists $n_0 \in \mathbb{N}$ such that whenever $a = a_1 \cdots a_n$ is a factorization of $a$ into atoms, 
one has $n \le n_0$ \cite{AAZ90}. We say that $D$ is an \emph{IDF-domain} (resp., a \emph{TIDF-domain}) if every nonzero nonunit element of $D$ has only a finite (resp., a  nonempty finite) set of nonassociate irreducible divisors \cite{GW75, GZ22}. The domain $D$  is said to be a \textit{prime-divisor-finite} domain (PDF-domain), or a $\text{GD}(1)$ in the terminology of \cite{MO}, if every nonzero element has only a finite number of nonassociate prime divisors. The integral domain $D$ is an \emph{AP-domain} provided that every irreducible of $D$ is prime. An integral domain with no atoms, called an \emph{antimatter domain} \cite{CDM99}, is vacuously an AP-domain. 

Following P.L. Clark \cite{Cl17}, we say that an integral domain $D$ is a Furstenberg domain if every nonzero nonunit element of $D$ has at least an irreducible divisor in $D$. Inspired by the above definition, we introduce the following stronger version and then define 
a TPDF-domain.

\begin{Df}$ $
\begin{itemize}
    \item An integral domain $D$ is called a \emph{strong Furstenberg domain} if every nonzero nonunit element of $D$ admits a prime divisor.
    \smallskip

    \item A strong Furstenberg PDF-domain is called a \emph{tightly PDF-domain} (\emph{TPDF-domain}).
\end{itemize}
\end{Df}
 
Note that, in \cite{BDKN} the authors use the term  ``near UFD" for ``TPDF-domain".\medskip

\subsection{Content of Polynomials and PSP-Domains}
 
  Let $D$ be an integral domain with quotient field $K$, and let $\mathcal{F}(D)$ denote the set of nonzero fractional ideals of $D$. For  $A \in \mathcal{F}(D)$, let $A^{-1}$ denote the fractional ideal $$(D:_K A)=\{x\in K\mid xA\subseteq D\}.$$  Let $f \in K[T_1,\ldots,T_r]$, the content of $f$,  denoted by $A_f$, is the fractional ideal of $D$ generated by the coefficients of $f$. For  $0\not=f \in D[T_1,\ldots,T_r]$,  the polynomial  $f$ is called primitive (over $D$)  if the coefficients of $f$ have no nontrivial divisor in $D$.   The polynomial  $f$ is said to be super-primitive if $A_f^{-1} = D$. Let $I$ be a finitely generated ideal of $D$. Following \cite{AS75},  $I$ is called \emph{primitive} if it is contained in no proper principal ideal of $D$, and $I$ will be called \emph{super-primitive} if $I^{-1} = D$. The domain $D$ is called a \emph{PSP-domain} if every primitive polynomial $f\in D[T]$ is super-primitive. Note that, $D$ is a PSP-domain if and only if every primitive ideal of $D$ is super-primitive, see \cite[Proposition~1.2]{AS75}. It is well known that every GCD domain is a PSP-domain and that, in turn, 
every PSP-domain is an AP-domain \cite[Proposition~3.2]{AZ07}.

  \subsection{Monoid domains}
Let $D$ be an integral domain, and let $S$ be a commutative cancellative torsion-free monoid, written additively. Denote by $\prec$ a total order on $S$. The monoid domain of $S$ over $D$ is defined by $D[S]=\lbrace \sum_i a_iX^{s_i}|a_i\in D \mbox{ and } s_i\in S\rbrace$. Note that $D[S]$ is an integral domain and each nonzero element $f\in D[S]$ has a unique representation in the form $$ f= a_{1}X^{s_1}+a_{2}X^{s_2}+\cdots+a_{n}X^{s_n},$$ where $n\in \mathbb{Z}_+$, $0\neq a_{i}\in D$ and ${s_i}\in S$ $(i=1,\ldots,n)$ such that $s_1\prec s_2 \prec\cdots\prec s_n$.
 The subset $\Supp(f)=\lbrace s_1,s_2,\ldots,s_n \rbrace$ of $S$ is called the support of $f$. The element $s_0:=\ord f$ is called the order of $f$. The constant term of $f$ will be denoted by $f(0)$.   The element $s_n$ (resp., $a_n$) is called the degree (resp., the leading coefficient) of $f$ and denoted by $\deg f$ (resp., $\lc(f)$).   Notice that  the units of $D[S]$ are  the monomials  $aX^s$, where  $a\in D$ and $s\in S$ are both  units  \cite[Corollary~4.2]{GP74}. An excellent reference for monoid domains is \cite{G84}.

 \section{Strong Furstenberg PDF-domains}\label{sec3}

Recall that an integral domain $D$ is called a \textit{strong Furstenberg} domain if every nonzero nonunit element of $D$ has at least a prime divisor in $D$. In terms of $\Pr(D)$, the set of nonzero principal integral ideals of $D$,  $D$ is a strong Furstenberg domain if and only if every  nonzero proper principal ideal of $D$ is contained in at least a prime principal ideal. A strong Furstenberg domain is obviously a Furstenberg domain. However, a Furstenberg domain need not be a strong Furstenberg domain. For example, consider an atomic domain which is not UFD (e.g. $K[X^2, X^3]$ where $K$ is a field), and hence an atomic domain also need not be a strong Furstenberg domain. 

\begin{Pro}\label{P1}
An integral domain $D$ is a strong Furstenberg domain if and only if it is a Furstenberg AP-domain. 
\end{Pro}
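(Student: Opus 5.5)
The proof splits into two implications, and both rest on one elementary fact: in an integral domain every prime element is irreducible. To see this, let $p$ be prime and write $p=bc$. Then $p\mid bc$, so we may assume $p\mid b$, say $b=pd$. This gives $p=pdc$, and cancellation in the domain yields $dc=1$, so $c$ is a unit. Since $p$ is a nonunit by definition, $p$ is an atom.

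For the forward direction, assume $D$ is a strong Furstenberg domain.

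\textbf{Furstenberg.} Every nonzero nonunit has a prime divisor, and by the fact above this divisor is irreducible.

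\textbf{AP.} Let $a$ be an irreducible element. It is a nonzero nonunit, so it has a prime divisor $p$, and we write $a=pc$. Irreducibility of $a$ forces $p$ or $c$ to be a unit. Since $p$ is not a unit, $c$ is a unit, so $a\sim p$. An associate of a prime is prime, because divisibility by $a$ and by $p$ coincide. Hence $a$ is prime, and $D$ is an AP-domain.

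For the converse, assume $D$ is a Furstenberg AP-domain. Let $x$ be a nonzero nonunit of $D$. The Furstenberg property gives an irreducible divisor $a$ of $x$. The AP property says $a$ is prime, so $x$ has a prime divisor, and $D$ is a strong Furstenberg domain.

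No step here is a real obstacle; the whole argument is unwinding definitions. The only points that need care are the primes-are-irreducible remark and the check that associates of primes are prime, both standard in integral domains.
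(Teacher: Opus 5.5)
Your proof is correct and follows essentially the same route as the paper: in the forward direction an irreducible element is forced to be associate to its prime divisor and hence prime, and the converse is immediate by combining the Furstenberg and AP properties. You also spell out two facts the paper leaves implicit, namely that a prime divisor of a nonzero element is irreducible and that an associate of a prime is prime.
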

\begin{proof}
Assume that $D$ is a strong Furstenberg domain. Clearly, $D$ is a Furstenberg.  On the other hand,  let $a$ be an irreducible element of $D$. Since $D$ is a strong Furstenberg domain, $a$ has a prime divisor $p$ in $D$. But $a$ is irreducible in $D$, then $a$ and $p$ are associates in $D$, and hence $a$ is prime in $D$.  Therefore, $D$ is an AP-domain. The reverse implication is trivial.	
\end{proof}
 It is clear that a UFD is a strong Furstenberg domain. At the other extreme, a strong Furstenberg domain need not be a UFD. For instance, fix a prime number and let $D = \mathbb{Z}_{(p)} + T\mathbb{C}[[T]]$. Then, $p$ is the only prime element of $D$, and each nonunit element of $D$ is divisible by $p$. Thus $D$ is a strong Furstenberg domain. However, $D$ is not atomic, and hence it is not a UFD, see \cite[Examples 4.4 and 6.3]{GZ22}. In general we have the following relation between these conditions. 
 
 \begin{Co}\label{P2}
An integral domain $D$ is a UFD  if and only if it is an atomic strong Furstenberg domain. 
\end{Co}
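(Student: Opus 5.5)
We prove the two implications separately, using Proposition~\ref{P1} to convert the strong Furstenberg hypothesis into the AP property. The main tool is the classical fact that an atomic domain in which every atom is prime is a UFD.

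For the forward direction, assume $D$ is a UFD. Then $D$ is atomic by definition. Every nonzero nonunit $a\in D$ factors as a product of irreducibles. In a UFD each irreducible is prime, so any irreducible factor of $a$ is a prime divisor of $a$. Hence $D$ is a strong Furstenberg domain.

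For the converse, assume $D$ is atomic and strong Furstenberg. By Proposition~\ref{P1}, $D$ is an AP-domain, so every atom of $D$ is prime. Atomicity lets us write each nonzero nonunit as $a=p_1\cdots p_n$ with each $p_i$ prime. It remains to show uniqueness up to order and associates.
\begin{itemize}
\item Take a second factorization $a=q_1\cdots q_m$ into atoms, and induct on $n$.
\item Since $p_1$ is prime and divides $q_1\cdots q_m$, it divides some $q_j$.
\item Because $q_j$ is irreducible and $p_1$ is a nonunit, $p_1\sim q_j$.
\item Cancelling $p_1$ (valid since $D$ is a domain) reduces to a product of $n-1$ primes equal to a product of $m-1$ atoms, with the unit absorbed into one factor.
\item When $n=1$ the product is a single atom, so $m=1$.
\end{itemize}
This shows $D$ is a UFD.

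There is no genuine obstacle. The only point needing care is this standard bookkeeping: the unit produced by cancellation must be absorbed into a remaining factor, and the base case must be handled. If preferred, one can simply cite the well-known characterization that a domain is a UFD if and only if it is atomic and AP, together with Proposition~\ref{P1}.
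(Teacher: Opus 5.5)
Your proof is correct and takes the same route as the paper: the paper says only that a strong Furstenberg domain is an AP-domain (Proposition~\ref{P1}) and relies on the standard fact that an atomic AP-domain is a UFD, which you additionally verify by the usual induction. Your forward direction, which the paper leaves implicit, is also fine, since any irreducible factor in a UFD is a prime divisor.
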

\begin{proof}
The result is immediate since a strong Furstenberg domain is an AP-domain.	
\end{proof}
The next proposition investigates the ascent of the strong Furstenberg property from an integral domain to its polynomial ring.
\begin{Pro} \label{RA45}
Let $D$ be an integral domain with quotient field $K$. Then the following conditions are equivalent.
\begin{enumerate}
\item $D[T]$ is a strong Furstenberg domain.

 \smallskip
     \item $D$ is a strong Furstenberg domain, and every non-constant primitive polynomial over $D$ has a prime factorization in $D[T]$. 
   \smallskip
  \item $D$ is a strong Furstenberg, $D$ is a PSP-domain, and every non-constant irreducible polynomial in $D[T]$ is irreducible in $K[T]$.
\end{enumerate}
\end{Pro}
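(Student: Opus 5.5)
We prove (1)$\Rightarrow$(2)$\Rightarrow$(1), then (2)$\Rightarrow$(3) and (3)$\Rightarrow$(2). Three standard facts are used throughout.
\begin{itemize}
\item[(a)] A prime $p$ of $D$ stays prime in $D[T]$, since $D[T]/pD[T]\cong (D/pD)[T]$ is a domain.
\item[(b)] $U(D[T])=U(D)$.
\item[(c)] A divisor in $D[T]$ of a nonzero constant is itself a constant.
\end{itemize}

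\textbf{(1)$\Rightarrow$(2).} Let $a\in D$ be a nonzero nonunit. By (b) it is a nonunit of $D[T]$, so it has a prime divisor $q$ in $D[T]$. By (c), $q\in D$. Since $qD[T]\cap D=qD$, the element $q$ is prime in $D$. Hence $D$ is strong Furstenberg.

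Now let $f$ be a primitive nonconstant polynomial; we argue by induction on $\deg f$. By (1), $f$ has a prime divisor $p$ in $D[T]$. The polynomial $p$ is not constant, because a constant divisor of $f$ divides all its coefficients and is therefore a unit. Write $f=pg$ with $\deg g<\deg f$. Then $g$ is primitive, since any common divisor of its coefficients divides all coefficients of $f$. If $g$ is constant, it is primitive and hence a unit, and we are done. Otherwise the induction hypothesis gives a prime factorization of $g$, hence of $f$.

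\textbf{(2)$\Rightarrow$(1).} Let $f\in D[T]$ be a nonzero nonunit. Suppose $f$ is constant, or more generally not primitive. Then some nonunit $d\in D$ divides all coefficients of $f$. This $d$ has a prime divisor in $D$, which is prime in $D[T]$ by (a) and divides $f$. Otherwise $f$ is primitive and nonconstant, and any factor in its prime factorization works.

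\textbf{(2)$\Rightarrow$(3).} \emph{$D$ is PSP.} Let $f=p_1\cdots p_n$ be a primitive nonconstant polynomial written as a product of primes of $D[T]$; each $p_i$ is nonconstant, as above. Take $x=a/b\in A_f^{-1}$. Then $b$ divides every coefficient of $af$, so $ap_1\cdots p_n=bh$ for some $h\in D[T]$. Now $p_n\mid bh$ and $p_n\nmid b$ for degree reasons, so $h=p_nh'$. Cancelling $p_n$ and repeating this $n$ times gives $a=bk$ with $k\in D$. Hence $x\in D$, so $A_f^{-1}=D$.

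\emph{Irreducible polynomials.} A nonconstant irreducible $g\in D[T]$ is primitive: otherwise $g$ would factor as a nonunit constant times a polynomial of positive degree, which is a nonunit by (b). By (2), $g$ is then a product of primes, so $g$ itself is prime. Suppose $g=uv$ in $K[T]$ with $\deg u,\deg v>0$. Clearing denominators gives $dg=u'v'$ with $d\in D\setminus\{0\}$ and $u',v'\in D[T]$. Then $g$ divides $u'$ or $v'$, which contradicts the degrees. So $g$ is irreducible in $K[T]$.

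\textbf{(3)$\Rightarrow$(2).} Let $f$ be primitive and nonconstant. By induction on degree, $f$ is a finite product of nonconstant irreducibles of $D[T]$. Indeed, in any factorization of $f$ into nonunits, a constant factor divides $f$ and is therefore a unit, so every factor has positive degree and the number of factors is at most $\deg f$.

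It remains to show that each such irreducible $g$ is prime. By (3), $g$ is irreducible in $K[T]$. As shown above, $g$ is primitive, hence super-primitive since $D$ is PSP. Suppose $g\mid hk$ in $D[T]$. Since $K[T]$ is a UFD, we may assume $h=gq$ with $q\in K[T]$. We need $q\in D[T]$, and this is the main obstacle: we need the content formula $(A_gA_q)_v=(A_{gq})_v$. We get it from the Dedekind--Mertens lemma $A_g^{m+1}A_q=A_g^{m}A_{gq}$. Because $A_g^{-1}=D$, we have $(A_g^{m})_v=D$ as well. Taking $v$-closures in the Dedekind--Mertens identity then gives
\[
A_q\subseteq (A_q)_v=(A_h)_v\subseteq D .
\]
Hence $q\in D[T]$, so $g\mid h$ in $D[T]$, and $g$ is prime. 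This yields the prime factorization required in (2).
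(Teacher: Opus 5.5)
Your proof is correct, but it takes a more self-contained route than the paper.

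\textbf{How the paper argues.} The paper runs the cycle (1)$\Rightarrow$(2)$\Rightarrow$(3)$\Rightarrow$(1) and leans on external results.
\begin{itemize}
\item For (1)$\Rightarrow$(2), it takes an atomic factorization of a primitive $f$ by a degree argument. It then makes each atom prime via Proposition~\ref{P1}, since strong Furstenberg implies AP.
\item For (2)$\Rightarrow$(3), it cites \cite[Theorem~A]{Tang}: a nonconstant polynomial is prime in $D[T]$ iff it is super-primitive and irreducible in $K[T]$. It combines this with \cite[Theorem~F]{Tang}, that products of super-primitive polynomials are super-primitive.
\item For (3)$\Rightarrow$(1), it uses the converse direction of Theorem~A.
\end{itemize}

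\textbf{How your argument differs.} You instead prove (1)$\Leftrightarrow$(2) and (2)$\Leftrightarrow$(3).
\begin{itemize}
\item You peel off prime divisors by induction on degree, so Proposition~\ref{P1} is not needed.
\item You prove super-primitivity directly by cancelling $p_n,\dots,p_1$ from $af=bh$. This sidesteps Theorem~F, since you never need that products of super-primitive polynomials are super-primitive.
\item You get irreducibility over $K$ by clearing denominators and counting degrees.
\item You prove yourself the direction ``super-primitive and irreducible over $K$ implies prime in $D[T]$''. You do this via Dedekind--Mertens (scaling $q$ into $D[T]$ makes the identity hold for fractional contents) and the identity $(A_g^{j})_v=((A_g)_v^{j})_v=D$.
\end{itemize}

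\textbf{What each approach buys.} Your version costs only the Dedekind--Mertens lemma and basic $v$-operation identities. It also explicitly justifies, via $qD[T]\cap D=qD$, that $D$ inherits the strong Furstenberg property from $D[T]$; the paper dismisses this as clear. The paper's version is shorter, but it outsources the substantive Gauss-lemma content to Tang.

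The only thing you leave implicit is the trivial constant case of PSP: a primitive constant is a unit, so its content has inverse $D$.
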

\begin{proof}   (1)$\Rightarrow $(2) Suppose that $D$ is a strong Furstenberg domain. Clearly $D$ is a strong Furstenberg domain. Let $f\in D[T]\setminus D$ be a primitive polynomial over $D$. Since $f$ is primitive, no nonunit of $D$ can occur in a factorization of $f$ in $D[T]$. Thus, by a degree argument, $f$ has an atomic factorization in $D[T]$, say $f=\prod_{i=1}^{n} f_i$, where each $f_i$ is irreducible in $D[T]$. By Proposition~\ref{P1}, for every $i=1,\ldots,n$, the polynomial $f_i$ is prime in $D[T]$. Hence $f$ has a prime factorization in $D[T]$.\smallskip

 (2)$\Rightarrow $(3) Suppose that $D$ is a strong Furstenberg domain and that every non-constant primitive polynomial over $D$ has a prime factorization in $D[T]$.  Let us prove that $D$ is a PSP-domain.   Thus, let $f$ be a primitive polynomial over $D$. We may assume that $f$ is not constant. Then $f$ has a prime factorization in $D[T]$, say $f=\prod_{i=1}^{n} f_i$, where each $f_i$ is prime in $D[T]$. Thus, it follows from \cite[Theorem~A]{Tang} that each $f_i$ is super-primitive over $D$. Hence, \cite[Theorem~F]{Tang} implies that $f$ is super-primitive, since it is a finite product of super-primitive polynomials. Thus, $D$ is a PSP-domain. Now let $f$ be a non-constant irreducible polynomial in $D[T]$. Then $f$ is primitive over $D$, and hence it has a prime factorization in $D[T]$. Since $f$ is irreducible, this factorization must be trivial, and therefore $f$ itself is prime in $D[T]$. Consequently, by \cite[Theorem~A]{Tang}, $f$ is irreducible in $K[T]$.\smallskip
 
 (3)$\Rightarrow $(1) Let $f$ be a nonzero nonunit element of $D[T]$. Since $D$ is a strong Furstenberg domain, we may suppose that $f$ is not constant in  $D[T]$. If $f$ is not primitive over $D$, then there exists a nonunit $a\in D$ such that  $f=af'$ for some $f'\in D[T]$. Since $D$ is a strong Furstenberg domain, $a$ has a prime divisor $b\in D$. But, it is easy to see that $b$ is also a prime divisor of $f$, and consequently of $f$, in $D[T]$. Hence, in this case, $f$ has at least one prime divisor in $D[T]$.  Otherwise, $f$ is primitive. By a degree argument, $f$ has an atomic factorization in $D[T]$, say $f=\prod_{i=1}^{n} f_i$, where each $f_i\in D[T]\setminus D$ irreducible. In particular, each $f_i$  is primitive, and hence super-primitive since $D$ is a PSP-domain. On the other hand, each $f_i$ is irreducible in $K[T]$ by hypothesis. Then  it follows from \cite[Theorem~A]{Tang} that each $f_i$ is prime in $D[T]$. In particular, $f$ has a prime divisor in $D[T]$. Therefore, $D[T]$ is a strong Furstenberg domain. 
\end{proof}

\begin{rk}
An alternative proof of the equivalence $(1) \Leftrightarrow (3)$ in the previous proposition follows from Proposition~\ref{P1} together with \cite[Proposition]{MR78} and \cite[Proposition~4.7]{GZ22}.
\end{rk}
Recall that an integral domain $D$  is said to be a PDF-domain (or a $\text{GD}(1)$) if every nonzero element has only a finite number of nonassociate prime divisors. Note that $D$ is a PDF-domain if and only if every nonzero principal ideal of $D$ is contained in only finitely many principal prime ideals of $D$. Clearly a UFD is a IDF-domain, and an IDF-domain is a PDF-monoid. Following \cite{MO}, for a constant-free ideal $I$ of $D[T]$ (i.e., $I \cap D = \{0\}$), we say that $D$ is \emph{$I$-robust} if only finitely many prime elements of $D$ become units in $D[T]/I$. We say $D$ is \emph{robust} if it is $I$-robust for every constant-free ideal $I$. An integral domain $D$ is robust if and only if it is a $\text{GD}(1)$ (i.e., a PDF-domain) domain see [4, Proposition 1.3]. Then,  it follows from \cite[Proposition 1.5]{MO} that a BFD is also a PDF-domain. On the other hand, $D$ is robust if and only if $D[T]$ is robust \cite[Proposition 1.4]{MO}. Thus, $D$ is a PDF-domain if and only if $D[T]$ is a PDF-domain. Clearly, this statement also holds for any family $\{T_\alpha\}_{\alpha \in \Lambda }$ of indeterminates.  Therefore, we include the following lemma for later citation.

\begin{Lm} \label{Lem1}
	Let $D$ be an integral domain. The following conditions are equivalent.
	\begin{enumerate}
		\item $D$ is a PDF-domain.
		\smallskip
		\item $D[T]$ is a PDF-domain.
		\smallskip
		
		\item $D[\{T_\alpha\}_{\alpha \in \Lambda }]$ is a PDF-domain for every nonempty set $\{T_\alpha\}_{\alpha \in \Lambda }$ of indeterminates. 
	\end{enumerate}
\end{Lm}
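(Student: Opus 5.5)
The plan is to prove the cycle (1)$\Rightarrow$(3)$\Rightarrow$(2)$\Rightarrow$(1). For (1)$\Leftrightarrow$(2) we rely on the facts recalled just before the statement, namely \cite[Propositions~1.3 and 1.4]{MO}. These say that $D$ is a PDF-domain if and only if $D$ is robust, and that $D$ is robust if and only if $D[T]$ is robust. This gives (1)$\Leftrightarrow$(2) immediately. The implication (3)$\Rightarrow$(2) is a special case: take $\Lambda$ a singleton and note that $D[\{T_\alpha\}]=D[T]$ up to renaming the indeterminate.

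\textbf{Step (1)$\Rightarrow$(3).} Let $\Lambda$ be arbitrary and put $R=D[\{T_\alpha\}_{\alpha\in\Lambda}]$. Fix a nonzero $f\in R$. Only finitely many indeterminates occur in $f$, say $T_1,\dots,T_n$. Set $R_n=D[T_1,\dots,T_n]$ and $R'$ for the polynomial ring over $R_n$ in the remaining indeterminates, so $R=R'$.

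First we show that $R_n$ is a PDF-domain. This follows by induction on $n$ from (1)$\Rightarrow$(2), applied to $R_{k+1}=R_k[T_{k+1}]$.

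Next we show that every prime divisor $p$ of $f$ in $R$ can be taken, up to a unit of $R$, to lie in $R_n$, and is prime there. Write $f=pg$. A degree argument in the extra indeterminates shows that $p$ and $g$ involve none of them: total degree in those variables is additive, since $R_n$ is a domain, and it is zero for $f$. Hence $p,g\in R_n$. Moreover $p$ is prime in $R_n$, because $R$ is free over $R_n$ and $R\cap \mathrm{Frac}(R_n)=R_n$. Concretely, if $p\mid ab$ with $a,b\in R_n$ and $p\mid a$ in $R$, then the quotient $a/p$ lies in $R_n$ by the same degree argument.

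Finally, associates in $R$ restrict correctly. The units of $R$ are the units of $D$, so two elements of $R_n$ that are associate in $R$ are already associate in $R_n$. Therefore the nonassociate prime divisors of $f$ in $R$ inject into the nonassociate prime divisors of $f$ in $R_n$, and there are finitely many of these.

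The main point needing care is this restriction step, namely that primality and associateness descend from $R$ to $R_n$. I expect the degree argument (polynomial rings over a domain are graded, so factors of a polynomial free of a variable are free of it) to handle it cleanly.
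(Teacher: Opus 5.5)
Your proposal is correct and follows the paper's route: (1)$\Leftrightarrow$(2) comes from \cite[Propositions~1.3 and 1.4]{MO} via robustness, exactly as in the paper. The paper only says that the statement ``clearly'' extends to an arbitrary family of indeterminates, and your argument fills in that step soundly: you reduce to the finitely many variables occurring in $f$, induct on $n$, and use the degree argument to show that every prime divisor of $f$ in $R$ already lies in $R_n$, is prime there, and that nonassociate ones stay nonassociate.
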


Recall that an integral domain $D$ is a TPDF-domain if $D$ is a strong Furstenberg PDF-domain; equivalently,  every nonzero nonunit of $D$ has a nonempty finite set of nonassociate prime divisors in $D$.  In terms of $\Pr(D)$, $D$ is a TPDF-domain if and only if every nonzero proper principal ideal of $D$ is contained in only a nonempty finite set of principal prime ideals of $D$.  

\begin{Co} \label{pr1}
	The following conditions are equivalent for an integral domain $D$.
	\begin{enumerate}
		\item $D$ is a TPDF-domain.
		\smallskip
		
		\item $D$ is a strong Furstenberg IDF-domain.
		\smallskip
		
		\item $D$ is an AP TIDF-domain.	
	\end{enumerate}
\end{Co}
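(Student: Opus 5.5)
The plan is to prove the cycle (1)$\Rightarrow$(2)$\Rightarrow$(3)$\Rightarrow$(1), using Proposition~\ref{P1} as the bridge between ``prime'' and ``irreducible'' divisors. The one fact used throughout is that in an AP-domain every irreducible element is prime, and every prime element is irreducible in any domain. Hence the set of irreducible divisors of a nonzero nonunit $a$ coincides with its set of prime divisors, and the same holds for associate classes.

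For (1)$\Rightarrow$(2), let $D$ be a TPDF-domain. It is strong Furstenberg by definition, and hence an AP-domain by Proposition~\ref{P1}. For a nonzero nonunit $a$, its irreducible divisors are exactly its prime divisors, which are finitely many up to associates by the PDF property. Thus $D$ is an IDF-domain.

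For (2)$\Rightarrow$(3), Proposition~\ref{P1} again gives that $D$ is an AP-domain. If $a$ is a nonzero nonunit, it has a prime divisor, which is irreducible. So the set of nonassociate irreducible divisors of $a$ is nonempty, and it is finite by the IDF property. Hence $D$ is a TIDF-domain.

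For (3)$\Rightarrow$(1), let $D$ be an AP TIDF-domain and $a$ a nonzero nonunit. Then $a$ has at least one irreducible divisor, and by the AP property that divisor is prime, so $D$ is strong Furstenberg. Since prime divisors are irreducible divisors, $a$ has only finitely many nonassociate prime divisors. This gives the PDF condition for nonzero nonunits. Units have no prime divisors, so the condition for all nonzero elements is immediate, and $D$ is a TPDF-domain.

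There is no real obstacle; the only point needing care is the convention that PDF refers to all nonzero elements while IDF and TIDF refer to nonzero nonunits. The unit case is trivial.
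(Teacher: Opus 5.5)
Your proof is correct and essentially the same as the paper's. Both rest on Proposition~\ref{P1} (strong Furstenberg implies AP) and on the fact that, in an AP-domain, irreducible divisors and prime divisors coincide; the paper leaves this implicit when it says (1)$\Leftrightarrow$(2) ``follows from the definition,'' while you spell it out and organize the argument as a cycle instead of two equivalences.
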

\begin{proof}
$(1)\Leftrightarrow (2)$ Follows from the definition of a TPDF-domain.

$(2)\Leftrightarrow (3)$ This is an immediate consequence of Proposition \ref{P1}.
\end{proof}

Following \cite{Sh71}, we say that an integral domain $D$ is \emph{Archimedean} if $\cap_{n\in \N} \;a^n D=(0)$ for every nonunit $a\in D$. 

\begin{Co} \label{pr}
	The following conditions are equivalent for an integral domain $D$.
	\begin{enumerate}
	\item $D$ is an atomic TPDF-domain.
		\smallskip
	\item $D$ is a UFD.
		\smallskip
		
	\item $D$ is an Archimedean TPDF-domain.
		\smallskip
	
	\item $D$ is a TPDF-domain and  $\bigcap_{n \in \N} p^n D = (0)$ for every prime element $p\in D$.		
	\end{enumerate}
\end{Co}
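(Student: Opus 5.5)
The plan is to prove the cycle $(1)\Rightarrow(2)\Rightarrow(3)\Rightarrow(4)\Rightarrow(1)$. For $(1)\Rightarrow(2)$, note that an atomic TPDF-domain is in particular an atomic strong Furstenberg domain, so Corollary~\ref{P2} shows it is a UFD. For $(2)\Rightarrow(3)$, we use the standard facts that a UFD is a TPDF-domain and is Archimedean. Indeed, every nonzero nonunit has a prime factorization, so it has at least one prime divisor and only finitely many up to associates. Moreover, if $0\neq x\in\bigcap_n a^nD$ with $a$ a nonunit, then $a$ has some prime factor $p$, and the exponent of $p$ in $x$ would be at least $n$ for every $n$, which is impossible. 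For $(3)\Rightarrow(4)$, every prime element is a nonunit, so the Archimedean condition applies in particular to $a=p$.

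The substance is $(4)\Rightarrow(1)$: assuming $D$ is TPDF and $\bigcap_n p^nD=(0)$ for every prime $p$, I would show that every nonzero nonunit $x$ is a product of primes, which gives atomicity (and in fact UFD).

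\begin{itemize}
\item Let $p_1,\dots,p_k$ be the finitely many nonassociate prime divisors of $x$. This list is nonempty by the strong Furstenberg property and finite by PDF.
\item For each $i$, the hypothesis $\bigcap_n p_i^nD=(0)$ gives a largest $e_i\ge 1$ with $p_i^{e_i}\mid x$.
\item Since the $p_i$ are pairwise nonassociate primes, an induction shows that $p_1^{e_1}\cdots p_k^{e_k}$ divides $x$. Concretely, if $p_1^{e_1}\mid x$, write $x=p_1^{e_1}y$. Then $p_2\nmid p_1$ forces $p_2^{e_2}\mid y$, and so on.
\item Write $x=p_1^{e_1}\cdots p_k^{e_k}u$. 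If $u$ were a nonunit, it would have a prime divisor $q$ by the strong Furstenberg property. Then $q$ divides $x$, so $q\sim p_j$ for some $j$, and hence $p_j^{e_j+1}\mid x$, contradicting the maximality of $e_j$.
\item Therefore $u$ is a unit, and $x$ is a product of primes. Thus $D$ is atomic, and it is already TPDF by hypothesis.
\end{itemize}

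The main point requiring care is the divisibility step that combines the prime powers into one divisor of $x$. It needs the primality of the $p_i$ and their pairwise nonassociateness. Everything else follows directly from the definitions and Corollary~\ref{P2}.
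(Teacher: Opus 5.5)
Your proof is correct. For the easy implications it matches the paper's: $(1)\Rightarrow(2)$ via Corollary~\ref{P2} (a TPDF-domain is strong Furstenberg), and $(2)\Rightarrow(3)\Rightarrow(4)$ directly.

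The difference lies in the substantive step. The paper proves $(4)\Rightarrow(2)$ by using Corollary~\ref{pr1} to regard $D$ as an AP TIDF-domain. It then cites condition (d) of \cite[Corollary~4.6]{GZ22}, which characterizes UFDs among AP-domains, without writing down any factorization.

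You instead argue from scratch:
\begin{itemize}
\item The PDF property makes the list $p_1,\dots,p_k$ of prime divisors of $x$ finite, and the strong Furstenberg property makes it nonempty.
\item The condition $\bigcap_n p_i^nD=(0)$, together with $x\neq 0$, gives the maximal exponents $e_i$.
\item Primality and pairwise nonassociateness let you assemble $p_1^{e_1}\cdots p_k^{e_k}\mid x$. Here $p_j\nmid p_i$ because $p_i$ is irreducible and $p_j\not\sim p_i$, so $p_j$ can be cancelled past $p_i^{e_i}$.
\item Applying the strong Furstenberg property to the cofactor, together with the maximality of the $e_j$, forces the cofactor to be a unit.
\end{itemize}
This gives prime factorizations directly, hence atomicity and in fact the UFD property. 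Your cycle therefore closes without proving $(2)\Rightarrow(1)$ separately.

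Your route is self-contained and transparent. It avoids the external reference and the detour through the TIDF language, and it shows exactly where each hypothesis is used. The paper's route is shorter and places the result within Gotti--Zafrullah's framework for AP-domains, but it depends on that cited characterization.
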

\begin{proof}
$(1)\Leftrightarrow (2)$ Follows from Proposition \ref{P1} and Corollary \ref{P2}, and the implications $(2)\Rightarrow (3)\Rightarrow (4)$ follow immediately. 

$(4)\Rightarrow (2)$ Since $D$ is a TPDF-domain,  Corollary \ref{pr1} implies that $D$ is an AP TIDF-domain. By the assumption, the condition (d) of  \cite[Corollary 4.6]{GZ22} holds in the AP-domain $D$, and therefore $D$ is a UFD. 
\end{proof}
The following corollary investigates the ascent of the TPDF property to polynomial rings. It follows directly from Proposition~\ref{RA45} together with Lemma~\ref{Lem1}.
\begin{Co}
 Let $D$ be an integral domain with quotient field $K$.  The following conditions are equivalent.
  \begin{enumerate}
  \item $D[T]$ is a TPDF-domain. 
   \smallskip
     \item $D$ is a TPDF-domain, and every non-constant primitive polynomial over $D$ has a prime factorization in $D[T]$. 
   \smallskip
  \item $D$ is a TPDF-domain, $D$ is a PSP-domain, and every non-constant irreducible polynomial in $D[T]$ is irreducible in $K[T]$.
  \end{enumerate}
\end{Co}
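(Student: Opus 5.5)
The plan is to combine Proposition~\ref{RA45}, which characterizes when $D[T]$ is a strong Furstenberg domain, with Lemma~\ref{Lem1}, which says that the PDF property passes between $D$ and $D[T]$ in both directions. The key observation is that being a TPDF-domain is the conjunction of being strong Furstenberg and being PDF. Each of the three conditions in the statement is obtained from the corresponding condition of Proposition~\ref{RA45} by adding ``and is PDF'' to the appropriate ring.

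First I show that (1) implies (2) and (3). Assume $D[T]$ is a TPDF-domain. Then $D[T]$ is strong Furstenberg, so Proposition~\ref{RA45} gives three facts: $D$ is strong Furstenberg, every non-constant primitive polynomial has a prime factorization in $D[T]$, and $D$ is a PSP-domain in which non-constant irreducible polynomials of $D[T]$ remain irreducible in $K[T]$. Moreover, $D[T]$ is PDF, so $D$ is PDF by Lemma~\ref{Lem1}. Hence $D$ is TPDF, and both (2) and (3) hold.

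Conversely, assume (2) or (3). Then $D$ is strong Furstenberg, and the remaining hypotheses are exactly those of the corresponding item of Proposition~\ref{RA45}. That proposition therefore gives that $D[T]$ is strong Furstenberg. Since $D$ is also PDF, Lemma~\ref{Lem1} shows that $D[T]$ is PDF. Thus $D[T]$ is TPDF, which is (1).

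There is no real obstacle here. The only point needing care is to note that the equivalence in Proposition~\ref{RA45} holds with the strong Furstenberg hypothesis on $D$ included in items (2) and (3). Consequently, adjoining the PDF condition to each item is compatible with that equivalence, by Lemma~\ref{Lem1}.
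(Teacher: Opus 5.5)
Your proposal is correct and is exactly the argument the paper has in mind. The paper states only that the corollary follows directly from Proposition~\ref{RA45} together with Lemma~\ref{Lem1}; you make that combination explicit by splitting TPDF into strong Furstenberg plus PDF, applying the proposition to the first part and transferring the PDF part between $D$ and $D[T]$ via the lemma.
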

\section{The $D+M$ construction and localisation}\label{sec4}

This section is devoted to investigating the behavior of the AP and TPDF properties under the $D+M$ construction and localization. Let $T$ be an integral domain that can be written in the from $T = K + M$, where  $K$ is a subfield of $T$ and $M$ is a nonzero maximal ideal of $T$. Let $D$ be a subring of $K$ and $R = D + M$. We begin this section by considering the AP-domains through the lens of the $D+M$ construction. 

\begin{Lm}\label{dr} Let $T$ be an integral domain of the form $K+M$, where  $K$ is a subfield of $T$ and $M$ is a nonzero maximal ideal of $T$.  Let $D$ subring of $K$ and $R = D + M$. Then the following statements hold.
\begin{enumerate}
\item $\mathcal{A}(R)\cap D=\mathcal{A}(D)$.
\smallskip
\item $\mathcal{P}(R)\cap D=\mathcal{P}(D)$.
\smallskip
\item For any ideal $P\subseteq T$ and any $x\in R$  such that $P\cap R=xR$, $P=xT$.
\end{enumerate} 
\end{Lm}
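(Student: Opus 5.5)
The plan is to first record two elementary facts about $R=D+M$ and then derive each of the three statements from them.

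\emph{Fact (a).} An element $d+m\in R$, with $d\in D$ and $m\in M$, is a unit of $R$ if and only if $d\in U(D)$. If $d\neq 0$, then $d+m$ is a unit of $T$, because $M$ is the unique maximal ideal of $T$ that avoids $K^{\ast}$. Its inverse has the form $d^{-1}+m'$ with $m'\in M$, and this inverse lies in $R$ exactly when $d^{-1}\in D$. If $d=0$, then $d+m\in M$ is a nonunit.

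\emph{Fact (b).} For $0\neq d\in D$ we have $dM=M$, since $d$ is a unit of $K\subseteq T$. Hence $M\subseteq dR$, and for $x=a+m\in R$ we get $d\mid x$ in $R$ if and only if $d\mid a$ in $D$. Throughout we also use $D\cap M=0$ and the fact that $R\to D$, $a+m\mapsto a$, is a ring homomorphism.

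For (1), take $0\neq d\in D$. If $d=ab$ with $a,b$ nonunits of $D$, then by (a) $a,b$ are nonunits of $R$, so $d$ is not an atom of $R$. Also, $d$ is a unit of $R$ if and only if it is a unit of $D$. Conversely, suppose $d=(a+m)(b+n)$ in $R$ with both factors nonunits. Reducing modulo $M$ gives $d=ab$ in $D$. Here $a,b\neq 0$ because $d\neq 0$, and by (a) neither $a$ nor $b$ is a unit of $D$. So $d$ is not an atom of $D$. Together these show $\mathcal{A}(R)\cap D=\mathcal{A}(D)$.

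For (2), again let $0\neq d\in D$. Suppose $d$ is prime in $D$ and $d\mid (a+m)(b+n)$ in $R$. By (b) this means $d\mid ab$ in $D$, so $d\mid a$ or $d\mid b$ in $D$, and then (b) gives $d\mid a+m$ or $d\mid b+n$ in $R$. Hence $d$ is prime in $R$. Conversely, suppose $d$ is prime in $R$ and $d\mid bc$ in $D$. Then $d$ divides $b$ or $c$ in $R$, and (b) turns this into divisibility in $D$. Nonunitness transfers by (a) in both directions.

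For (3), the inclusion $xT\subseteq P$ is immediate, so the work is the reverse inclusion, which I will handle by cases on $x$.
\begin{itemize}
\item If $x\notin M$, then $x=d+m$ with $d\neq0$, so $x$ is a unit of $T$ and $P=T=xT$.
\item If $x=0$, then $P\cap M\subseteq P\cap R=0$. A nonzero $P$ would give $0\neq pm\in P\cap M$ for any $0\ne p\in P$ and $0\ne m\in M$, a contradiction, so $P=0=xT$.
\item If $0\neq x\in M$, then $P\cap R=xR\subseteq M$. First, $P\neq T$, since otherwise $1\in xR\subseteq M$. Next, any $p=k+m'\in P$ with $0\neq k\in K$ would be a unit of $T$, which is impossible. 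Hence $P\subseteq M\subseteq R$, so $P=P\cap R=xR\subseteq xT$.
\end{itemize}
The only delicate point in the whole lemma is this last case, where we need to see that $P$ must lie inside $M$; everything else is bookkeeping with (a) and (b).
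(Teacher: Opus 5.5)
Your Fact (a) is false in the generality of the lemma. For $d\neq 0$ one has $d+m=d(1+d^{-1}m)$, and $1+M\subseteq U(T)$ holds exactly when $M$ is the Jacobson radical of $T$, i.e.\ when $T$ is quasilocal. Your justification does not help, since every maximal ideal of $T$ avoids $K^{*}$. Concretely, take $T=\mathbb{Q}[X]$, $M=X\mathbb{Q}[X]$, $D=\mathbb{Z}$: the element $1+X\in R$ has $K$-component $1\in U(D)$, but it is a unit of neither $R$ nor $T$. Non-quasilocal $T$ such as $L[X]$ is precisely what the paper later feeds into this lemma. What survives is the case $m=0$ of (a), namely $U(D)=D\cap U(R)$, together with your Fact (b), which is correct. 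So your proof of (2) and the first half of (1) stand.

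The second half of (1) uses the false direction ($a\in U(D)\Rightarrow a+m\in U(R)$) when you say neither $a$ nor $b$ is a unit of $D$. You can repair it from the specific relation, as the paper does. Cancelling $ab\neq 0$ in $ab=(a+m)(b+n)=ab(1+a^{-1}m)(1+b^{-1}n)$ gives $(1+a^{-1}m)(1+b^{-1}n)=1$ with both factors in $R$, so both are units of $R$. Then $a\in U(D)$ would make $a+m=a(1+a^{-1}m)$ a unit of $R$.

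Part (3) is more seriously affected. In your case $x\notin M$, the conclusion $P=T$ is wrong: in the example above take $x=1+X$ and $P=(1+X)T$; then $P\cap R=xR$ but $P\neq T$. In the case $0\neq x\in M$, the assertion that $k+m'$ with $k\neq 0$ is a unit of $T$ is again unjustified. The missing idea is to rescale inside $P$ by $k^{-1}\in T$. Given $y=k+m\in P$: if $k=0$, then $y\in M\cap P\subseteq P\cap R=xR$. If $k\neq 0$, then $k^{-1}y=1+k^{-1}m\in P\cap R=xR\subseteq xT$, so $y\in xT$. This proves $P\subseteq xT$ for every $x$ at once, with no case split and no appeal to units of $T$.
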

\begin{proof}
(1) Let $d\in\mathcal{A}(R)\cap D$ such that $d=d_1d_2$ for some  $d_1,d_2\in D$. Since $d$ is irreducible in $R$, either $d_1\in U(R)$ or $d_2\in U(R)$. Since $U(D)= D\cap U(R)$,  it follows that $d_1\in U(D)$ or $d_2\in U(D)$, and so $d\in \mathcal{A}(D)$. Hence $\mathcal{A}(R)\cap D\subseteq\mathcal{A}(D)$.  Conversely, suppose that $d\in\mathcal{A}(D)$ such that $d=(d_1+m_1)(d_2+m_2)$ for some $d_i\in D$  and $m_i\in M$, $i=1,2$.  Note that $d_1\neq 0$ and $d_2\neq 0$ since otherwise $d\in M$. Then,  it follows from the equality $d=(d_1+m_1)(d_2+m_2)$ that $d=d_1d_2$ and $1=(1+d_1^{-1}m_1)(1+d_2^{-1}m_2)$. In particular, the elements $(1+d_1^{-1}m_1)$ and $(1+d_2^{-1}m_2)$ are units of $R$. Since $d$ is irreducible in $D$, either $d_1\in U(D)$ or $d_2\in U(D)$.   Say, for example, that $d_1\in U(D)$. Then  $(d_1+m_1)= d_1(1+d_1^{-1}m_1)\in U(R)$, and hence $d \in \mathcal{A}(R)\cap D$. Thus $\mathcal{A}(D)\subseteq\mathcal{A}(R)\cap D$. Therefore, $\mathcal{A}(D)=\mathcal{A}(R)\cap D$.\smallskip

(2) Let $p\in\mathcal{P}(R)\cap D$ such that $d_0p=d_1d_2$ for some $d_0,d_1,d_2\in D$. Since $p$ is  prime in $R$, $p$ divides $d_1$ or $d_2$ in $R$. Say, for example, that $d_1=p(d+m)$  for some $d\in D$  and $m\in M$. Then $d_1=pd$, and hence $p$ divides $d_1$ in $D$. Therefore, $p$ is prime in $D$. Conversely, assume that $p$ is  prime in $D$ such that $r_0p=r_1r_2$ where $r_i=d_i+m_i$, $d_i\in D$  and $m_i\in M$, $i=0,1,2$. If $d_i=0$ for either $i=1$ or $i=2$,  then $r_i=p(p^{-1}m_i)$ with $p^{-1} m_i \in  M\subseteq R$, then we are done. Otherwise, $d_0p=d_1d_2$, and $p$ divides $d_1$ or $d_2$ in $D$ by the fact that $p$ is  prime in $D$. Say, for example, that $d_1=pd$ for some $d\in D$. Thus $r_1=pr_1'$, where $r_1':=(d+p^{-1}m_1)\in R$. Therefore $p$ is prime in $R$.
\smallskip

(3)  Let $P$ be an ideal of $T$ and $x\in R$  such that $P\cap R=xR$.  We have $x \in P \cap R$, which implies $x \in P$. Since $P$ is an ideal of $T$, it follows that $xT \subseteq P$. Conversely, let $y=k+m\in P$, where $k\in K$ and $m\in M$.  If $k=0$, then $y\in M\subseteq R$. Hence $y\in P \cap R $, and so $y\in xR$. Since $xR \subseteq xT$, $y\in xT$. Otherwise, $y=k(1+m')$, where $m'=k^{-1}m$.   It is clear that $1+m'\in P\cap R$, and then $1+m'\in xR$. As $xR \subseteq xT$, $1+m'\in xT$. Since $xT$ is an ideal of $T$, $k(1+m')=y\in xT$. Therefore $P \subseteq xT$. 
\end{proof}
We first consider the case when $D$ is not a field.
\begin{Th} \label{Th1}
	Let $T$ be an integral domain of the form $K+M$, where  $K$ is a subfield of $T$ and $M$ is a nonzero maximal ideal of $T$.  Let $D$ subring of $K$ and $R = D + M$. Assume that $D$ is not a field. Then the following statements hold.
\begin{enumerate}
\item  If $D$ and $T$ are AP-domains, then $R$ is an AP-domain. \smallskip

\item If $R$ is an AP-domain, then  $D$ is an AP-domain.
\smallskip

\item If $R$ is an AP-domain and each element of $\mathcal{A}(T)\cap M$ is prime in $T$, then   $T$ is an AP-domain.
\end{enumerate} 
\end{Th}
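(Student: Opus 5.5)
The plan rests on one structural fact: when $D$ is not a field, every atom of $R$ lies outside $M$. Fix a nonzero nonunit $d_0\in D$. For $x\in M$ we can write $x=d_0\cdot(d_0^{-1}x)$ with $d_0^{-1}x\in M\subseteq R$, and both factors are nonunits of $R$, so $x$ is not irreducible. Every element of $R\setminus M$ can be written as $d(1+m)$ with $0\neq d\in D$ and $m\in M$, since $d^{-1}m\in M$. Similarly, every element of $T\setminus M$ has the form $k(1+m)$ with $k\in K^*$.

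I will use three routine facts repeatedly.
\begin{enumerate}
\item An element $1+m$ is a unit of $R$ if and only if it is a unit of $T$.
\item Suppose $r\in R$ and $t\in T$ with $r=(1+m)t$. Comparing $K$-components gives $t\in D+M=R$. Hence divisibility by $1+m$ in $T$, of an element of $R$, already holds in $R$.
\item Every element of $T$ is, up to a unit of $T$ (namely some $k\in K^*$), an element of $R$.
\end{enumerate}

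\emph{Proof of (1).} Let $x=d(1+m)$ be an atom of $R$.
\begin{itemize}
\item Suppose $1+m\in U(T)$. Then $1+m\in U(R)$, so $x\sim d$ and $d\in\mathcal{A}(R)\cap D=\mathcal{A}(D)$ by Lemma~\ref{dr}(1). Since $D$ is AP, $d\in\mathcal{P}(D)=\mathcal{P}(R)\cap D$ by Lemma~\ref{dr}(2). Hence $x$ is prime in $R$.
\item Suppose $1+m\notin U(T)$. Then $d\in U(D)$, because $x$ is an atom. Next, $1+m$ is an atom of $T$. Indeed, take a factorization $1+m=ab$ in $T$ into nonunits. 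Neither $a$ nor $b$ lies in $M$, so we may rescale by an element of $K^*$ to get $a=k(1+m_1)$ and $b=k^{-1}(1+m_2)$. This yields $1+m=(1+m_1)(1+m_2)$ with both factors nonunits of $R$, by fact (i), a contradiction. Since $T$ is AP, $1+m$ is prime in $T$. Now let $1+m\mid rs$ in $R$. Then $1+m$ divides $r$ or $s$ in $T$, and hence in $R$ by fact (ii).
\end{itemize}

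\emph{Proof of (2).} Let $d\in\mathcal{A}(D)$. By Lemma~\ref{dr}(1), $d\in\mathcal{A}(R)$, so $d$ is prime in $R$ because $R$ is AP. By Lemma~\ref{dr}(2), $d$ is then prime in $D$.

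\emph{Proof of (3).} Let $a\in\mathcal{A}(T)$. If $a\in M$, then $a$ is prime by hypothesis. Otherwise $a\sim 1+m$ in $T$, with $1+m$ an atom of $T$.
\begin{itemize}
\item First, $1+m$ is an atom of $R$. Suppose $1+m=(d_1+m_1)(d_2+m_2)$ in $R$. Then $d_1d_2=1$, so $d_1,d_2\in U(D)$. One of the two factors is a unit of $T$, and since its $D$-component is a unit of $D$, it is a unit of $R$ by fact (i).
\item Since $R$ is AP, $1+m$ is therefore prime in $R$.
\item Now let $1+m\mid bc$ in $T$. By fact (iii), we may assume $b,c\in R$. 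Then fact (ii) gives $1+m\mid bc$ in $R$. Primeness in $R$ yields $1+m\mid b$ or $1+m\mid c$ in $R$, and hence in $T$.
\end{itemize}

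The main obstacle is the atom analysis in part (1), specifically transferring atomicity of $1+m$ from $R$ to $T$. This is handled by rescaling factors with $K^*$, which is precisely where the absence of atoms inside $M$ (which needs $D$ not a field) and fact (i) are used.
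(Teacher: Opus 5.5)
Your proof is correct. Parts (1) and (2) follow the paper's argument. No atom of $R$ lies in $M$ because $D$ is not a field, atoms are written as $d(1+m)$ with a case split on which factor is a unit, and Lemma~\ref{dr}(1),(2) handle the factor from $D$. The only difference there is that you prove the transfer of irreducibility and primality of $1+m$ between $R$ and $T$ from your facts (i)--(iii), where the paper cites \cite[Lemma~1.5]{CMZ}.

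The genuine divergence is in part (3). The paper passes from ``$1+m'$ prime in $R$'' to ``$1+m'$ prime in $T$'' ideal-theoretically. It applies the description of prime ideals of $D+M$ in \cite[Theorem~1.3]{CMZ}, rules out the alternative $(1+m')R=P_0+M$ (it would force $1\in P_0$), and then uses Lemma~\ref{dr}(3) to get $P=(1+m')T$. You instead verify the prime condition element-wise, moving divisibility questions from $T$ into $R$ via facts (iii) and (ii).

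Your route is elementary and self-contained. It also shows that both transfers, in (1) and in (3), come from one observation about elements of the form $1+m$: comparing $K$-components.

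The paper's route does not depend on that special form. It reuses the same template in Theorem~\ref{Th} for atoms $x\in M$: first $xR=R\cap P$ or $xR=P_0+M$, then Lemma~\ref{dr}(3). For such $x$ your fact (ii) gives nothing, because the $K$-component of $xt$ is $0$ whatever $t$ is.
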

\begin{proof} 
(1)  Assume that $D$ and $T$ are AP-domains. Let $x$ be an irreducible element of $R$. Note that no element of $M$ is irreducible in $R$. Indeed,  for every nonzero nonunit $d \in D$ and every $m \in M$ we have $m = d(d^{-1}m)$ with $d^{-1} m \in  M$. Thus, there exist $d\in D^*$ and $m \in M$ such that $x=d(1+m)$. Since $x$ is  irreducible in $R$, either $d\in U(R)$ or  $1+ m\in U(R)$. If $d\in U(R)$, then $1+m$ is irreducible in $R$, and hence it is irreducible in $T$ by \cite[Lemma~1.5(2)]{CMZ}. Since $T$ is an AP-domain, $1+ m$ is prime in $T$. Then it follows from \cite[Lemma~1.5(3)]{CMZ} that $1+ m$, and hence $x$, is prime in $R$. Otherwise,  $d$ is irreducible $R$, then it is   irreducible in $D$ by Lemma \ref{dr}(1). Since  $D$ is an AP-domain,   $d$ is prime in $D$, and hence it is prime in $R$ by Lemma \ref{dr}(2). Therefore $x$ is prime in $R$. Consequently, $R$ is an AP-domain. 
\smallskip

 (2) Assume that $R$ is an AP-domain. Let $d$ be an irreducible element of $D$. Then, by Lemma~\ref{dr}(1), $d$ is irreducible in $R$.  Since $R$ is an AP-domain,  $d$ is prime in  $R$. Then it follows from Lemma~\ref{dr}(2) that $d$ is prime in  $D$. Therefore, $D$ is an AP-domain.
 \smallskip
 
 (3)  Assume that $R$ is an AP-domain and that each element of $\mathcal{A}(T)\cap M$ is prime in $T$. Let $x=k+m$ be an irreducible element of $T$, where $k\in K$ and $m\in M$.  If $k=0$, then $x\in M\cap \mathcal{A}(T)$. Hence $x$ is a prime element of $T$ by hypotheses.  Otherwise,  $k\neq 0$, and  then it follows that $x=k(1+m')$, where $m':=k^{-1}m\in M$. Since $k\in U(T)$, $1+m'$ is irreducible in  $T$.  Then, by  \cite[Lemma~1.5(2)]{CMZ},   $1+m'$ is irreducible  in  $R$.  Since $R$ is an AP-domain,  $1+m'$ is prime in  $R$. Then  $(1+m')R$ is a prime ideal of $R$. By \cite[Theorem~1.3]{CMZ}, either $(1+m')R=R\cap P$, where $P$ is a prime ideal of $T$, or $(1+m')R=P_0+ M$, where $P_0$ is a prime ideal of $D$. If the latter case holds, a contradiction is generated. Indeed, the equality  $(1+m')R=P_0+ M$ implies $1+m'\in P_0+M$. Then $1\in P_0$, and hence $P_0=D$  which is impossible since  $P_0$ is a prime ideal of $D$.  Therefore, $(1+m')R=R\cap P$ for some prime ideal $P$ of $T$. Then Lemma \ref{dr}(3) implies that  $P=(1+m')T$,  and hence $(1+m')$ is a prime element of $T$. Therefore,  $x$ is  prime in $T$. Consequently, $T$ is an AP-domain. 
\end{proof}
\begin{Co}
 Let $D$ be an integral domain with quotient field $K$, and let $L$ be a field extension of $K$. Let $R = D + X L[X]$ and $R' = D + X L \ldb X \rdb$. Assume that $D$ is not a field. Then $R$ (resp.,  $R'$) is an AP-domain  if and only if  $D$ is an  AP-domain.
\end{Co}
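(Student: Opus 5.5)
The plan is to apply Theorem~\ref{Th1} with $T = L[X]$ (resp.\ $T = L\ldb X \rdb$), $K$ replaced by $L$, and $M = XL[X]$ (resp.\ $M = XL\ldb X\rdb$). In both cases $T = L + M$ with $M$ a nonzero maximal ideal and $L$ a subfield of $T$, and $D \subseteq K \subseteq L$ is a subring of $L$ that is not a field. Thus $R = D + M$ (resp.\ $R' = D+M$) is exactly the $D+M$ construction of Theorem~\ref{Th1}, and the hypotheses of that theorem hold.

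The forward implication is immediate. If $R$ (resp.\ $R'$) is an AP-domain, then Theorem~\ref{Th1}(2) gives that $D$ is an AP-domain.

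For the converse, assume $D$ is an AP-domain. By Theorem~\ref{Th1}(1) it suffices to check that $T$ is an AP-domain.
\begin{itemize}
\item $L[X]$ is a PID, hence a UFD, so every irreducible element is prime.
\item $L\ldb X\rdb$ is a DVR (its only prime up to associates is $X$), hence also a UFD and therefore an AP-domain.
\end{itemize}
Applying Theorem~\ref{Th1}(1) then shows that $R$ and $R'$ are AP-domains.

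The only point needing care is matching notation: the field called $K$ in Theorem~\ref{Th1} is here $L$, not the quotient field of $D$. Theorem~\ref{Th1} only requires $D$ to be a subring of that field, which holds because $D \subseteq K \subseteq L$. The assumption that $D$ is not a field is used exactly as in Theorem~\ref{Th1}. So no real obstacle is expected; the corollary is a direct specialization.
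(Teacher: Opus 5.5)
Your proposal is correct and takes the same route as the paper. The forward direction is Theorem~\ref{Th1}(2), and the converse is Theorem~\ref{Th1}(1) applied with $T=L[X]$ (resp.\ $T=L\ldb X\rdb$), which is a UFD and hence an AP-domain; your observation that $L$ plays the role of the field $K$ in Theorem~\ref{Th1}, since $D\subseteq K\subseteq L$, is exactly the identification the paper makes implicitly.
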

\begin{proof} The direct implication follows from Theorem~\ref{Th1}(2). The converse follows immediately from Theorem~\ref{Th1}(1), since $T=L[X]$ (resp., $T'=L\ldb X\rdb$) is a UFD, and hence an AP-domain.
\end{proof}
\begin{rk} If $D$ is a field, Theorem \ref{Th1}(1) may fails. To see this, let $T=\C[X]$ and $R=\R+X\C[X]$. Note that $T$ is a UFD, and hence an AP-domain.  On the other hand, $D$ is vacuously an AP-domain. However, since $-(iX)^2=X^2$, it is clear that $iX \in R$ is  irreducible but not prime in $R$. Then $R$ is not an AP-domain.
\end{rk}

\begin{Th} \label{Th}
	Let $T$ be an integral domain of the form $K+M$, where  $K$ is a subfield of $T$ and $M$ is a nonzero maximal ideal of $T$.  Let $D$ be a subfield  of $K$ and $R = D + M$.  Then $R$ is an AP-domain if and only if  $T$ is an AP-domain and each element of $\mathcal{A}(R)\cap M$ is prime in $R$. 
\end{Th}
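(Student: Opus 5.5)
We prove the two implications separately. The plan is to split irreducibles of $R$ (resp. $T$) into those lying in $M$ and those of the form $d(1+m)$ with $d\in D^*$, $m\in M$, and to transfer primeness of the latter between $R$ and $T$ via \cite[Lemma~1.5]{CMZ}, \cite[Theorem~1.3]{CMZ} and Lemma~\ref{dr}(3), as in Theorem~\ref{Th1}. The point is that, $D$ being a field, every nonzero element of $D$ is a unit of $R$. Hence an element of $R\setminus M$ has the form $d(1+m)$ with $d\in U(R)$. This simplifies the case analysis, but it also means irreducibles of $R$ may now lie in $M$, which is why the extra hypothesis on $\mathcal{A}(R)\cap M$ appears.

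($\Rightarrow$) Assume $R$ is an AP-domain. The condition that each element of $\mathcal{A}(R)\cap M$ is prime in $R$ is then immediate. To show $T$ is an AP-domain, take $x=k+m\in\mathcal{A}(T)$.

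If $k\neq 0$, write $x=k(1+m')$ with $k\in U(T)$. Then $1+m'$ is irreducible in $T$, hence irreducible in $R$ by \cite[Lemma~1.5(2)]{CMZ}, hence prime in $R$. By \cite[Theorem~1.3]{CMZ}, $(1+m')R$ is either $P\cap R$ for a prime $P$ of $T$, or $P_0+M$. The second case is impossible since $1+m'\notin M$ and $P_0$ is proper (here $P_0=0$). So Lemma~\ref{dr}(3) gives $P=(1+m')T$, and $x$ is prime in $T$. This is exactly the argument of Theorem~\ref{Th1}(3).

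If $k=0$, i.e. $x\in M$, we need a new argument, since Theorem~\ref{Th1}(3) assumed this case away. I would pick $\lambda\in K^*$ such that $\lambda x$ is irreducible in $R$. Then $\lambda x$ is prime in $R$, and I would deduce that $x$ is prime in $T$. This is the step I expect to be the main obstacle. The natural choice $\lambda=1$ should work: if $x=ab$ in $R$ with $a,b\in R$, then since $x$ is irreducible in $T$, one of $a,b$, say $a$, is a unit of $T$ lying in $R$. Thus $a\notin M$, so $a\in D^*(1+M)$. Since $a^{-1}\in T$ has the form $\kappa+\mu$ with $\kappa$ the inverse of $a$'s $K$-component, which lies in $D$ because $D$ is a field, we get $a^{-1}\in R$ and $a\in U(R)$. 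Hence $x\in\mathcal{A}(R)$, so $x$ is prime in $R$.

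To pass from $R$ to $T$: suppose $x\mid yz$ in $T$. Multiply $y$ and $z$ by suitable elements $m_0\in M\setminus\{0\}$ to land in $M\subseteq R$, for instance $ym_0,zm_0\in M$. Then $x\mid (ym_0)(zm_0)$ in $R$? This needs $yzm_0^2/x\in R$, which holds since $yz/x\in T$ and $m_0^2\in M$ gives an element of $M$. So $x$ divides $ym_0$ or $zm_0$ in $R$. It then remains to cancel $m_0$, and here I would choose $m_0=x$ itself. Then $x\mid yx$ trivially, so that choice fails; instead I would use that $xM\cap\ldots$. Alternatively, I would apply \cite[Theorem~1.3]{CMZ} to the prime ideal $xR\subseteq M$: either $xR=P\cap R$ with $P$ prime in $T$, so Lemma~\ref{dr}(3) gives $xT=P$ prime, or $xR=P_0+M=M$, i.e. $M=xR$. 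In the latter case $xT\supseteq M$, so $xT=M$, which is prime (maximal) in $T$. Either way $x$ is prime in $T$. I would adopt this cleaner route.

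($\Leftarrow$) Assume $T$ is an AP-domain and $\mathcal{A}(R)\cap M\subseteq\mathcal{P}(R)$. Let $x\in\mathcal{A}(R)$. If $x\in M$, then $x$ is prime by hypothesis. Otherwise $x=d(1+m)$ with $d\in D^*=U(R)$, so $1+m\in\mathcal{A}(R)$. By \cite[Lemma~1.5(2)]{CMZ} it is irreducible in $T$, hence prime in $T$, hence prime in $R$ by \cite[Lemma~1.5(3)]{CMZ}. Therefore $x$ is prime and $R$ is an AP-domain.
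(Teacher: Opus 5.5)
Your proposal is correct and follows essentially the same route as the paper: you split an irreducible $x=k+m$ of $T$ into the cases $k=0$ and $k\neq 0$, and you use the dichotomy of \cite[Theorem~1.3]{CMZ} together with Lemma~\ref{dr}(3), with the case $xR=M$ giving $xT=M$, and \cite[Lemma~1.5]{CMZ} for the converse. Your explicit check that an irreducible $x\in M$ of $T$ stays irreducible in $R$ (a unit of $T$ lying in $R$ is a unit of $R$ because $D$ is a field) fills in the step the paper calls ``easy to see,'' and the abandoned attempt with multipliers $m_0$ should simply be deleted.
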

\begin{proof} 
 Assume that $R$ is an AP-domain. Obviously, each element of $\mathcal{A}(R)\cap M$ is prime in $R$. Let $x=k+m$ be an irreducible element of $T$, where $k\in K$ and $m\in M^*$. We have two cases: 
 \smallskip

 \textit{Case 1:} $k=0$. Then $x=m\in R$. It is easy to see that $x$ is irreducible in  $R$.  Since $R$ is an AP-domain,  $x$ is prime in  $R$, and hence $xR$ is a prime ideal of $R$. By \cite[Theorem~1.3]{CMZ}, either $xR=R\cap P$, where $P$ is a prime ideal of $T$, or $xR=P_0+ M$, where $P_0$ is a prime ideal of $D$. In the first case, it follows from Lemma \ref{dr}(3) that $P=xT$, and hence $x$ is a prime element of $T$. In the latter case, since $D$ is a field, $P_0=(0)$, and so $M=xR$ is a prime ideal of $R$. Since $x\in M$, $M\subseteq xR\subseteq xT\subseteq M$. Thus,  $xT=M$. Since $M$ is an ideal of $T$, it follows that $xT$ is a prime ideal of $T$, and therefore $x$ is a prime element of $T$.
  \smallskip
 
 \textit{Case 2:} $k\neq 0$. Then $x=k(1+k^{-1}m)$. It is clear that $m':=k^{-1}m\in M$. Since $k\in U(T)$, $1+m'$ is irreducible in  $T$. Then, by  \cite[Lemma~1.5(1)]{CMZ},    $1+m'$ irreducible  in  $R$.  Since $R$ is an AP-domain,  $1+m'$ is prime in  $R$. Then  $(1+m')R$ is a prime ideal of $R$. By \cite[Theorem~1.3]{CMZ}, either $(1+m')R=R\cap P$, where $P$ is a prime ideal of $T$, or $(1+m')R=P_0+ M$, where $P_0$ is a prime ideal of $D$. The first case implies, by  Lemma \ref{dr}(3),  that $P=(1+m')T$,  and then $(1+m')$ (and hence $x$) is a prime element of $T$. The latter case is impossible; indeed, since $D$ is a field, $P_0=(0)$, and so $M=(1+m')R$ is a prime ideal of $R$. In particular,  $1=(1+m')-m'\in M$, whence $M=R$ which is a contradiction. \\ In conclusion,  $x$ is a prime element of $T$. Consequently, $T$ is an AP-domain.
 \smallskip
 
 Conversely, assume that $T$ is an AP-domain and that  elements of $\mathcal{A}(R)\cap M$ are prime in $R$. Let $x$ be an irreducible element of $R$. If $x=m\in M$, then $x$ is prime in $R$ by hypotheses. Otherwise, as previously seen,  up to multiplication by a $d\in D^*\subseteq U(R)$,  $x$ is of the form $1+ m$ for some $m \in M^*$. Moreover, $1+ m$ is irreducible in $T$ since it is irreducible in $R$, see \cite[Lemma~1.5(2)]{CMZ}. Thus, since $T$ is an AP-domain, $1+ m$ is  prime in $T$. Then it follows from \cite[Lemma~1.5(3)]{CMZ} that $1+ m$, and hence $x$, is a prime element of $R$. 
 \smallskip
\end{proof}
\begin{Ep}
Let $T$ denotes the monoid algebra $\C [\Q_+]$ of the Puiseux monoid $\Q_+$ over $\C$.  Let $M = \left\{ f \in T : f(0) = 0 \right\}$ the ideal of $T$ consisting of all elements $f$ with a zero constant term (i.e., $\ord f>0$). One can easily check that $M$ is a maximal ideal of $T$ such that  $T = \C + M$. Consider the subring $R = \Q + M$ of $T$. For every $f\in M$, $X^s$ divides $f$ in $R$, where $s=\frac{\ord f}{2}\in \Q_+$.  Thus no element of  $M$ is irreducible in $R$, and then $\mathcal{A}(R)\cap M=\emptyset$. On the other hand, $T$ is an antimatter domain (cf. \cite[Corollary~3]{BEGraz}),  hence an AP-domain. Then it follows from Theorem \ref{Th} that $R$ is an AP-domain. Note that $R$ is not an antimatter domain. This can be verified by observing that $R$ contains the irreducible element $f(X) = X-2$. \end{Ep}

Next, we consider the (strong) Furstenberg, PDF, and TPDF properties under the $D+M$ construction. Recall that a TPDF-domain is an AP TIDF-domain (see Corollary \ref{pr1}).

\begin{Th}\label{tN}
Let $T$ be an integral domain of the form $K+M$, where  $K$ is a subfield of $T$ and $M$ is a nonzero maximal ideal of $T$.  Let $D$ subring of $K$ and $R = D + M$.  Assume that $D$ is not a field. Then the following statements hold.
\begin{enumerate}
\item  $R$ is a Furstenberg domain if and only if $D$ is a  Furstenberg domain and every nonunit of the form $1+m$,  where $m\in M$, has at least an irreducible divisor in $R$. 
		\smallskip 
		
		\item  If $R$ is a (strong) Furstenberg domain, then $D$ is a (strong) Furstenberg domain. 
		\smallskip 
		
		\item  If $D$ and $T$ are (strong) Furstenberg domains, then $R$ is a (strong) Furstenberg domain. 
		\smallskip 
		
\item  If $R$ is a PDF-domain, then $\mathcal{P}(D)$ is finite up to associates.
\smallskip 

\item  If $T$ is a PDF-domain and  $\mathcal{P}(D)$ is finite up to associates, then $R$ is a PDF-domain.
\smallskip 

\item  If $R$ is a TPDF-domain, then $D$ is TPDF-domain with  $\mathcal{P}(D)$ is finite up to associates.
\smallskip 
 
\item  If $T$ and $D$ are TPDF-domains with $\mathcal{P}(D)$ is (nonempty) finite up to associate,  then $R$ is a TPDF-domain. 
\end{enumerate}
\end{Th}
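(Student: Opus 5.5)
The plan rests on the structure of $R=D+M$ when $D$ is not a field. Every nonzero element of $R$ is either in $M$, or of the form $d(1+m)$ with $d\in D^*$ and $m\in M$. An element $1+m$ with $m\in M$ is a unit in $R$ iff it is a unit in $T$. For $x\in M$ and any nonzero nonunit $d\in D$ we have $x=d(d^{-1}x)$ with $d^{-1}x\in M$, so every element of $M$ is divisible in $R$ by every element of $D^*$. We will combine this with Lemma~\ref{dr}(1),(2), with the transfer results \cite[Lemma~1.5]{CMZ} between $R$ and $T$ for elements $1+m$ (irreducible/prime in $R$ iff in $T$), and with the observation used in Theorem~\ref{Th1}(1) that $\mathcal{A}(R)\cap M=\emptyset$. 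The same argument gives $\mathcal{P}(R)\cap M=\emptyset$: $m\in M$ prime would divide $d$ or $d^{-1}m$ for a nonunit $d\in D$, which is impossible. Hence, up to associates, $\mathcal{A}(R)=\mathcal{A}(D)\cup\{\text{irreducibles }1+m\}$ and $\mathcal{P}(R)=\mathcal{P}(D)\cup\{\text{primes }1+m\}$.

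For (1)--(3), a nonunit $x=d(1+m)$ of $R$ with $1+m$ a nonunit has an irreducible (prime) divisor iff $1+m$ does, or $d$ does in $D$. If $d$ is a nonunit, a divisor of $d$ in $D$ that is an atom (prime) of $D$ stays one in $R$ by Lemma~\ref{dr}. An $x\in M$ is divisible by every nonunit $d\in D$, hence by any atom (prime) of $D$ once $D$ has one, and $D$ being Furstenberg with $D$ not a field guarantees this. This proves (1)$\Leftarrow$. For (1)$\Rightarrow$ and (2), take a nonunit $d\in D$ and an irreducible (prime) divisor $y$ in $R$. Then $y\notin M$, and $y=d'(1+m')$ irreducible forces either $y\sim d'\in\mathcal{A}(D)$ or $y\sim 1+m'$. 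In the latter case $y\mid d$ in $R$ implies, by comparing $D$-components modulo $M$, that $1+m'$ divides a unit, a contradiction. So $d$ has an atom (prime) divisor $d'$, and $d'\mid d$ in $D$ by the $D$-component argument of Lemma~\ref{dr}(2). For (3), if $T$ is (strong) Furstenberg, then $1+m$, being a nonunit in $T$, has an irreducible (prime) divisor $t=k+n$ in $T$. Since $1+m\notin M$, we have $k\ne0$, so $1+k^{-1}n$ divides $1+m$ in $T$, hence in $R$ (an $R$-divisibility check: the quotient has constant term $1$ so lies in $R$), and it is irreducible (prime) in $R$ by \cite[Lemma~1.5]{CMZ}.

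For (4): any $x\in M^*$ is divisible by every element of $\mathcal{P}(D)$, and $\mathcal{P}(D)\subseteq\mathcal{P}(R)$ with nonassociate primes of $D$ staying nonassociate in $R$ (since $U(D)=D\cap U(R)$). So PDF forces $\mathcal{P}(D)$ to be finite. For (5), the prime divisors of $x=d(1+m)$ or $x\in M$ in $R$ are, up to associates, elements of $\mathcal{P}(D)$ (finitely many) or primes $1+m'$ of $R$, which are primes of $T$ dividing $x$ in $T$. Nonassociate ones in $R$ stay nonassociate in $T$: if $1+m'=u(1+m'')$ with $u\in U(T)$, then $u=k+n$ with $k\in K^*$, and comparing constants gives $k=1$, so $u\in U(R)$. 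Thus PDF of $T$ bounds them. Then (6) is (2)+(4), noting a TPDF-domain is PDF so $D$ is PDF (indeed finiteness of $\mathcal{P}(D)$ suffices). Statement (7) is (3)+(5). The main obstacle is the bookkeeping in (5) and (1)$\Rightarrow$: relating associates and divisibility between $R$ and $T$, in particular checking that a $T$-divisor of the form $1+n$ of an element of $R$ yields an $R$-divisor. Here I would use the decomposition $T=K+M$ and the normalization of constant terms throughout.
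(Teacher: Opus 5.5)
Your proposal is correct and is essentially the paper's proof. It uses the same split of $R\setminus\{0\}$ into $M$ and $D^*(1+M)$, Lemma~\ref{dr} together with the $R$/$T$ transfer for elements $1+m$, the fact that every element of $M$ is divisible by every prime of $D$ for (4), and the $D$-type versus $(1+m)$-type split of prime divisors for (5), with (6) and (7) obtained by combining parts. The deviations are only local: you handle the strong Furstenberg cases of (2) and (3) directly instead of via Proposition~\ref{P1} and Theorem~\ref{Th1}, and you get ``$1+m'$ prime in $R$ $\Rightarrow$ prime in $T$'' by constant-term normalization rather than through \cite[Theorem~1.3]{CMZ} and Lemma~\ref{dr}(3). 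That normalization does work: after rescaling $a,b$ by elements of $K^*$, the cofactor of $1+m'$ in $ab$ has $K$-component $0$ or $1$ and so lies in $R$. You also explicitly treat the subcase of (1)$\Leftarrow$ where $1+m$ is a unit, which the paper passes over.
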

\begin{proof}
(1)  Suppose that $R$ is a Furstenberg domain. Let $d$ be a nonzero nonunit element of $D$. Since $d$ is a nonunit of $R$, it has an irreducible divisor $r_1$ in $R$. Then there exists $r_2\in R$ such that $d=r_1r_2$. Say, $r_i=d_i+m_i$, where $d_i\in D$  and $m_i\in M$, $i=1,2$. Note $d_i\neq 0$ since otherwise $d\in M$. Thus $d=d_1d_2$, and hence $(1+d_1^{-1}m_1), (1+d_2^{-1}m_2)\in U(R)$. Since $r_1$ is irreducible in $R$ and $r_1=d_1(1+d_1^{-1}m_1)$, the fact that $(1+d_1^{-1}m_1)\in U(R)$ implies that  $d_1\in\mathcal{A}(R)\cap D$. Then it follows from  Lemma~\ref{dr}(1) that $d_1$ is an irreducible divisor of $d$ in $D$. Therefore, $D$ is a Furstenberg domain. The second condition is obvious. 
\smallskip

 Conversely, suppose that $D$ is a  Furstenberg domains and that every nonunit of the form $1+m$, where $m\in M$, has at least an irreducible divisor in $R$. Let $x=d+m$ be a nonzero nonunit element of $R$, where $d\in D$ and $m\in M$. If $d\neq 0$, then $x=d(1+m')$, where $m'=d^{-1}m\in M$. By hypotheses, the element $(1+m')$, and hence  $x$, has at least an irreducible divisor in $R$. Otherwise, let $l$ a nonzero nonunit element of $D$, and then $x=l(l^{-1}m)$ with $l^{-1}m\in M$. Since  $D$ is a  Furstenberg domain,  $l$ has a divisor $l_1\in\mathcal{A}(D)$ in $D$.  It follows from Lemma~\ref{dr} that $l_1\in\mathcal{A}(R)\cap D$, and hence $l_1$ is an irreducible divisor of $x$ in $R$.
 \smallskip

(2) Follows from the part (1), Proposition~\ref{P1}, and Theorem~\ref{Th1}(2).
\smallskip

(3) Assume that $D$ and $T$ are Furstenberg domains. By the part (1), it suffices to show that every nonunit of the form $1+m$, where $m\in M$, has at least an irreducible divisor in $R$. Let $m\in M$ such that $1+m$ is a nonunit of $R$. Since $T$ is a Furstenberg domain, $1+m$ has has an irreducible divisor $t_1$ in $T$. Note that $t_1$ (also every divisor of $1+m$ in $T$) is of the form  $u_1(1+m_1)$, where $u_1\in K^*$ and $m_1\in M$. Thus, there exists $t_2=u_2(1+m_2)\in R$ such that $1+m=u_1(1+m_1)u_2(1+m_2)$.
Hence $1=u_1u_2$ and $1+m=(1+m_1)(1+m_2)$. Since $(1+m_1)$ is irreducible in $T$, it is so in $R$ by \cite[Lemma~1.5(1)]{CMZ}. Consequently, $1+m$ has an irreducible divisor in $R$, and therefore $R$ is a Furstenberg domain.

 If $D$ and $T$ are strong Furstenberg domains, then it follows from Proposition~\ref{P1} and Theorem~\ref{Th1}(1) that $R$ is an AP-domain. Thus, by the first part and Proposition~\ref{P1},  $D$ is a strong Furstenberg domain.   
\smallskip

(4) Suppose that $R$ is a PDF-domain. Fix $0\neq m_0\in M$. Then, for every $p\in \mathcal{P}(D)$, we have $m_0 = p(p^{-1}m_0)$ with $p^{-1} m_0 \in  M$. By Lemma \ref{pr}, every $p\in \mathcal{P}(D)$ is a prime divisor of $m_0$ in $R$. Since $U(D)= D\cap U(R)$, $\mathcal{P}(D)$ must be  finite up to associates. 
\smallskip

(5) Suppose that $T$ is a PDF-domain and that $\mathcal{P}(D)$ is finite (up to associates). Let $x=d+m$ be a nonzero nonunit element of $R$, where $d\in D$ and $m\in M$. Since no element of $M$ is irreducible in $R$, every prime divisor of $x$ in $R$ is, up to multiplication of a $u\in U(R)$, has the form $d_0$ or $1+m_0$, where $d_0\in D$  and $m_0\in M$.
Since $\mathcal{P}(D)$ is finite  and $U(D)= D\cap U(R)$, Lemma~\ref{dr} implies that we have only a finite number of nonassociate prime divisors of $x$ of the first form (i.e., $d_0\in D$).   By a similar argument as in the proof of Theorem \ref{Th1} each prime divisor of $x$ of the form  $(1+m_0)$ is prime in $T$. The fact that, for every $m_0,m_0'\in M$,  the elements $(1+m_0)$ and $(1+m_0')$ are associate in $R$ if and only if they are so in $T$ together with the fact that $T$ is a PDF-domain implies that $x$ has only a finite number of nonassociate prime divisors of the second form. Therefore, $R$ is a PDF-domain. 
\smallskip

The part (6) (resp., (7)) follows Follows from the parts (2) and (4) (resp., (3) and (5)) since a TPDF-domain is a strong Furstenberg PDF-domain. 
\end{proof}
\begin{rk} \label{r48} Under the assumptions of Theorem \ref{tN}, 
if T is a quasilocal domain, then each element of the form  $1+m$, for every $m\in M$, is a unit of $R$ (resp., of $T$). Then, in this case, we obtain the following stronger results.
\begin{enumerate}
\item  $R$ is a Furstenberg domain if and only if $D$ is a Furstenberg domain. \smallskip 

\item  $R$ is a PDF-domain if and only if $\mathcal{P}(D)$ is finite up to associates.
\smallskip 
\end{enumerate}
If in addition $T$ is an AP-domain, then the following statements hold.
\begin{enumerate}
\setcounter{enumi}{2}
\item $R$ is a strong Furstenberg domain if and only if $D$ is a strong Furstenberg domain.
		\smallskip
		 
\item $R$ is a TPDF-domain if and only if $D$ is a TPDF-domain with $\mathcal{P}(D)$ is  (nonempty) finite up to associates.  
\end{enumerate}
\end{rk}

\begin{Co}
 Let $D$ be an integral domain with quotient field $K$, and let $L$ be a field extension of $K$. Let $R = D + X L[X]$ and $R' = D + X L \ldb X \rdb$. Assume that $D$ is not a field. Then $R$ (resp.,  $R'$) is an TPDF-domain  if and only if  $D$ is a TPDF-domain with $\mathcal{P}(D)$ is finite  up to associates.
\end{Co}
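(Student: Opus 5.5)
The plan is to apply Theorem~\ref{tN}(6) and (7) with $T=L[X]$ (resp.\ $T'=L\ldb X\rdb$), $K$ replaced by $L$, and $M=XL[X]$ (resp.\ $M'=XL\ldb X\rdb$). In both cases $T=L+M$ with $L$ a subfield and $M$ a nonzero maximal ideal, and $D\subseteq K\subseteq L$ is a subring of $L$ that is not a field, so the standing hypotheses of Theorem~\ref{tN} hold.

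For the direct implication, assume $R$ (resp.\ $R'$) is a TPDF-domain. Then Theorem~\ref{tN}(6) gives that $D$ is a TPDF-domain and $\mathcal{P}(D)$ is finite up to associates.

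For the converse, assume $D$ is a TPDF-domain and $\mathcal{P}(D)$ is finite up to associates. The ring $T=L[X]$ is a PID, and $T'=L\ldb X\rdb$ is a DVR; both are UFDs. By Corollary~\ref{pr}, a UFD is a TPDF-domain, so $T$ and $T'$ are TPDF-domains.

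It remains to check that $\mathcal{P}(D)$ is nonempty, as Theorem~\ref{tN}(7) requires. Since $D$ is not a field, it has a nonzero nonunit $d$. Because $D$ is a strong Furstenberg domain, $d$ has a prime divisor, so $\mathcal{P}(D)\neq\emptyset$. Theorem~\ref{tN}(7) then gives that $R$ (resp.\ $R'$) is a TPDF-domain.

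For $R'$ there is a shortcut: $T'$ is quasilocal, so Remark~\ref{r48}(4) yields the equivalence directly, using that $T'$ is an AP-domain. Either route works.

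The only point needing care is the hypothesis bookkeeping. We must use $L$, not $K$, as the field in the $K+M$ decomposition. We must also note that $D$ is a subring of $L$ through $K$, and that the nonemptiness in (7) comes for free from $D$ not being a field.
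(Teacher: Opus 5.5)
Your proposal is correct and follows the paper's proof. The forward direction is Theorem~\ref{tN}(6), and the converse is Theorem~\ref{tN}(7) applied with $T=L[X]$ (resp.\ $T'=L\ldb X\rdb$), which is a UFD and hence a TPDF-domain; your explicit check that $\mathcal{P}(D)\neq\emptyset$ (since $D$ is not a field and is strong Furstenberg) fills in a detail the paper leaves implicit.
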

\begin{proof} The direct implication follows from the part (6) of Theorem ~\ref{tN}. The converse follows  from Theorem~\ref{tN}(7) because $T=L[X]$ (resp., $T'=L \ldb X \rdb$) is a  TPDF-domain (since it is a UFD).
\end{proof}

\begin{Co} \label{fd}
		Let $T$ be an integral domain of the form $K+M$, where  $K$ is a subfield of $T$ and $M$ is a nonzero maximal ideal of $T$.  Let $D$ be a subfield  of $K$ and $R = D + M$.  Then the following statements hold.
	\begin{enumerate}
		\item If $\mathcal{A}(R) \cap M \neq \emptyset$, then $R$ is a TPDF-domain if and only if $T$ is a TPDF-domain, each element of $\mathcal{A}(R)\cap M$ is prime in $R$, and $|K^*/D^*| < \infty$.  
		\smallskip
		
		\item If  $\mathcal{A}(R) \cap M = \emptyset$, then  $R$ is a TPDF-domain if and only if $T$ is a TPDF-domain.
	\end{enumerate}
\end{Co}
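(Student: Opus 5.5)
The plan is to work throughout with the characterization of Corollary~\ref{pr1}: a TPDF-domain is the same as an AP-domain that is a TIDF-domain, or equivalently a strong Furstenberg PDF-domain. The AP part in both (1) and (2) is handled by Theorem~\ref{Th}. This leaves the existence and finiteness of prime divisors, which I will compare between $R$ and $T$.

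The basic bookkeeping is as follows. Since $D$ is a field, every nonzero element of $R$ outside $M$ is, up to a unit $d\in D^*$, of the form $1+m$. Also $1+m\in U(T)$ if and only if $1+m\in U(R)$, because the inverse has the form $1+m'$. Consequently
\[
U(T)=K^*\cdot U(R) \quad\text{and}\quad U(T)/U(R)\cong K^*/D^*.
\]
Prime divisors of the form $1+m$ behave the same way in $R$ and in $T$, by \cite[Lemma~1.5]{CMZ} and the argument of Case~2 in Theorem~\ref{Th}. Two elements $1+m$ and $1+m'$ are associate in $R$ if and only if they are associate in $T$. So the only difference between the two rings comes from primes lying in $M$. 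If $p\in M$ is prime in $R$, then Case~1 of Theorem~\ref{Th} shows that $pT$ is a prime ideal of $T$. Each $T$-associate class $\{kp : k\in K^*\}$ of such a prime then splits into exactly $|K^*/D^*|$ classes in $R$.

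For (1), fix an atom $m\in\mathcal{A}(R)\cap M$.
\begin{itemize}
\item \emph{Necessity of $|K^*/D^*|<\infty$.} For every $k\in K^*$ the element $km$ again lies in $\mathcal{A}(R)\cap M$. Indeed, a factorization of $km$ in $R$ can be rescaled by $k^{-1}$: the factors either stay in $R$ or are units of $T$ times elements of $R$, and I will check this carefully. By hypothesis, or because $R$ is AP, each $km$ is prime in $R$. Since $km\cdot k^{-1}m=m^2$, every $km$ divides $m^2$ in $R$. Now $km\sim k'm$ in $R$ exactly when $k/k'\in D^*$. The PDF property therefore forces $|K^*/D^*|<\infty$.
\item \emph{The rest of the forward direction.} That $T$ is TPDF follows by transporting prime divisors upward, using the correspondence above.
\item \emph{Converse.} Assume $T$ is TPDF, the atoms of $R$ in $M$ are prime in $R$, and the index is finite. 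Theorem~\ref{Th} gives that $R$ is AP. Finiteness of prime divisors in $R$ comes from finiteness in $T$ multiplied by the finite index $|K^*/D^*|$. For existence, take a nonunit $x\in R$. If $x\notin M$, a prime divisor of $x$ in $T$ can be normalized to the form $1+m$, and it is then prime in $R$. If $x\in M$, I would take a prime divisor $q$ of $x$ in $T$ and try to show that some $K^*$-multiple of $q$ is an atom of $R$, hence prime in $R$, and divides $x$ in $R$.
\end{itemize}

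For (2), no element of $M$ is an atom of $R$. Since AP is already known from Theorem~\ref{Th}, every prime of $R$ is, up to units, of the form $1+m$. Given a nonunit $x\in T$, I multiply it by an element of $K^*$ to land in $R$. A prime divisor of it in $R$ has the form $1+m$, and this is then a prime divisor of $x$ in $T$. Finiteness transfers in the same way through the correspondence of associate classes.

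The main obstacle is primes of $T$ that lie in $M$.
\begin{itemize}
\item In (1), when $x\in M$, I must produce a prime divisor of $x$ in $R$ from a prime $q\in M$ of $T$. I would first try to show directly that $q$ is an atom of $R$ up to a factor in $K^*$. Failing that, I would use the fixed atom $m$: the prime $mT$ of $T$ should coincide with $M$ or divide every element of $M$.
\item In (2), I must rule out, or otherwise account for, primes of $T$ inside $M$. I would try to show that such a $q$ would have to be an atom of $R$, which is impossible when $\mathcal{A}(R)\cap M=\emptyset$, or else that it is irrelevant to the finiteness count.
\end{itemize}
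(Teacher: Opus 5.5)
Your route differs from the paper's. The paper gets the AP part from Theorem~\ref{Th} and the TIDF part from \cite[Theorem~5.3]{GZ22}, then combines them through Corollary~\ref{pr1}. Comparing prime divisors of $R$ and $T$ directly is a workable alternative. Your bookkeeping for elements of the form $1+m$ is fine, and so is your argument that $|K^*/D^*|<\infty$ is necessary.

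However, the step everything hinges on is left open: how a prime $q\in M$ of $T$ is seen in $R$. You only list things you ``would try''. This step is needed in more places than you say:
\begin{itemize}
\item existence of a prime divisor in $R$ for $x\in M$, in the converse of (1);
\item excluding $T$-primes in $M$, in both directions of (2);
\item the forward direction of (1), where the PDF property of $T$ requires bounding the $T$-primes in $M$ that divide $x$. That is a \emph{downward} transport, not the upward one you invoke.
\end{itemize}
Your fallback is also false. Take $T=R=K[X,Y]$, $M=(X,Y)$ and $D=K$. Then the atom $X$ is prime, yet $XT\neq M$ and $X\nmid Y$.

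The missing idea is short. Your observation that $1+m\in U(T)$ iff $1+m\in U(R)$, together with $M\cap U(T)=\emptyset$, gives $U(T)\cap R=U(R)$. Hence every $q\in M$ that is irreducible in $T$ (in particular every prime of $T$ in $M$), and every $kq$ with $k\in K^*$, is an atom of $R$. Moreover, let $x\in R$ with $x=qt$ in $T$ and $t=k+m'$. Either $k=0$ and $q\mid x$ in $R$, or $x=(kq)(1+k^{-1}m')$ and $kq\mid x$ in $R$. With this:
\begin{itemize}
\item (2) is immediate, because $\mathcal{A}(R)\cap M=\emptyset$ means $T$ has no primes in $M$;
\item in (1), the element $kq$ is prime in $R$ (by hypothesis, or because $R$ is AP). This supplies the missing existence statement, and an injection of $T$-classes of prime divisors into $R$-classes.
\end{itemize}

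In fact (1) collapses entirely. Suppose some $p\in M$ is prime in $R$, and let $k\in K^*$. In $R$ we have $p^2=(kp)(k^{-1}p)$, and $p\mid kp$ in $R$ means $k\in R\cap K=D$. So $k\in D$ or $k^{-1}\in D$, hence $D=K$ and $R=T$. Under the hypotheses of (1), the finite-index condition is therefore implied by the primality condition.
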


\begin{proof} Follows from Theorem \ref{Th} and  \cite[Theorem~5.3]{GZ22}.
\end{proof}

Next, we investigate how the TPDF property behaves under localization. A saturated multiplicative subset $S$ of an integral domain $D$ is called \emph{splitting} if each $x \in D$ can be written as $x = as$ for some $a \in D$ and $s \in S$ such that $aD \cap tD = a tD$ for all $t \in S$. Note that the extension $D \subseteq D_S$ is inert when $S$ is a splitting multiplicative set, see \cite[Proposition~1.5]{AAZ92}.

\begin{Pro} \label{proAP}
	Let $D$ be an integral domain, and let $S$ be a splitting multiplicative subset of $D$ generated by primes. Then $D$ is an AP-domain (resp., a PDF-domain)  if and only if $D_S$ is an AP-domain (resp., a PDF-domain). 
\end{Pro}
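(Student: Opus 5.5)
The plan is to describe the primes and atoms of $D_S$ concretely in terms of those of $D$, and then to transfer the AP and PDF properties in both directions.

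\emph{Key structural facts.} Since $S$ is splitting, every $x\in D$ can be written as $x=as$ with $s\in S$ and $a$ an \emph{$S$-element}, meaning $aD\cap tD=atD$ for all $t\in S$. Because $S$ is generated by primes, the $S$-elements are exactly the nonzero elements divisible by no prime of $S$. The extension $D\subseteq D_S$ is inert (\cite[Proposition~1.5]{AAZ92}). I would first prove the following.

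\textbf{(a)} For an $S$-element $a$, we have $aD_S\cap D=aD$. Indeed, if $x=ab/t\in D$ with $t\in S$, then $xt\in aD\cap tD=atD$, so $x\in aD$.

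\textbf{(b)} An $S$-element $a$ is prime (resp.\ irreducible) in $D$ if and only if it is prime (resp.\ irreducible) in $D_S$. For primes this follows from (a): $aD$ is the contraction of $aD_S$, and conversely $aD_S$ is the extension of a prime ideal disjoint from $S$. For atoms, use inertness: a factorization $a=(b/s)(c/t)$ in $D_S$ can be rewritten, after absorbing the elements of $S$ and splitting $b,c$, as a factorization of $a$ by $S$-elements times units.

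\textbf{(c)} Every element of $D_S$ is associate to an $S$-element. Every prime (resp.\ atom) of $D$ is either associate to a prime generator of $S$, which becomes a unit in $D_S$, or is an $S$-element. To see this, note that an atom divisible by a prime $q\in S$ is associate to $q$.

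\textbf{(d)} Two $S$-elements $a,b$ are associate in $D$ if and only if they are associate in $D_S$. This follows from (a) applied in both directions.

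\emph{AP transfer.} Suppose $D$ is AP and $x$ is an atom of $D_S$. Take $x\sim a$ with $a$ an $S$-element. By (b), $a$ is an atom of $D$, hence prime in $D$, hence prime in $D_S$. Conversely, suppose $D_S$ is AP and $a$ is an atom of $D$. If $a$ is associate to a generator of $S$, it is prime. Otherwise $a$ is an $S$-element, and by (b) it is an atom of $D_S$, hence prime in $D_S$, hence prime in $D$.

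\emph{PDF transfer.} Suppose $D$ is PDF and $x=a/s\in D_S$ is nonzero with $a$ an $S$-element. A prime divisor of $x$ in $D_S$ is associate to an $S$-prime $p$ of $D$ with $p\mid a$ in $D_S$, hence $p\mid a$ in $D$ by (a). By (d), nonassociate such divisors stay nonassociate in $D$, so there are only finitely many. Conversely, suppose $D_S$ is PDF and $x=as\in D$. Its prime divisors in $D$ are of two kinds:
\begin{itemize}
\item $S$-primes dividing $x$, which divide $a$. Using (b) and (d), there are only finitely many up to associates.
\item Prime generators of $S$ dividing $s$. Here I need finiteness: $s$ is a product of finitely many primes, and any prime dividing such a product is associate to one of the factors.
\end{itemize}

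The main obstacle is (b) for atoms, namely showing that an $S$-element irreducible in $D$ stays irreducible in $D_S$. Everything else uses only (a). For this step I would apply the splitting to the numerators, writing $b=b's_1$ and $c=c's_2$ with $b',c'$ being $S$-elements. Then $at=b'c's_1s_2$ with $t\in S$. Using $aD\cap tD=atD$ together with the fact that a product of $S$-elements is an $S$-element, comparing the two splittings gives $a\sim b'c'$ in $D$ up to a unit. Irreducibility of $a$ then forces $b'$ or $c'$ to be a unit, so $b/s$ or $c/t$ is a unit in $D_S$.
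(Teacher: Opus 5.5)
Your proposal is correct and follows essentially the same route as the paper. For the AP part, the paper also reduces to $S$-elements and transfers irreducibility and primality between $D$ and $D_S$; the difference is that it imports these transfer facts from \cite[Corollary~1.4(c) and Lemma~1.1]{AAZ92}, whereas you prove them directly in (a)--(d). For the PDF part, the paper only points to the analogous IDF results \cite[Theorems~2.4(a) and 3.1]{AAZ92}, and your explicit count of the two kinds of prime divisors of $as$ fills this in.

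The points you leave implicit all follow at once from the fact that an $S$-element is divisible by no prime of $S$:
\begin{itemize}
\item $aD\cap S=\emptyset$ for a prime $S$-element $a$;
\item an $S$-element that is a nonunit of $D$ is a nonunit of $D_S$;
\item the easy direction of (b) for atoms;
\item an $S$-prime dividing $as$ must divide $a$.
\end{itemize}
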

\begin{proof}
 Assume that $D$ is an AP-domain. Let $x$ be an irreducible element of $D_S$. We may suppose that $x\in D$ and $xD \cap sD = xsD$ for all $s\in S$. Then it follows from \cite[Corollary~1.4(c)]{AAZ92} that $x$ is irreducible in $D$. Since $D$ is an AP-domain, $x$ is prime in $D$. Thus, by  \cite[Corollary~1.4(c)]{AAZ92} again, $x$ is prime in $D_S$. Hence  $D_S$ is an AP-domain. Conversely, suppose that $D_S$ is an AP-domain. Let $x$ be an irreducible element of $D$.  Note that every element $r \in D$ can be written as $r = as$ for some $a \in D$ and $s \in S$ such that no prime $p\in S$ divides $a$ in $D$. If $x\in S$, then $x$ is an associate of a prime element $p\in S$, and so $x$ is prime in $D$. Otherwise, $x\notin S$, and hence $x$ is irreducible in $D_S$ by \cite[Lemma~1.1]{AAZ92}. Since $D_S$ is an AP-domain, $x$ is prime in $D_S$. Then we may suppose that $xD \cap sD = xsD$ for all $s\in S$. Hence,  by  \cite[Corollary~1.4(c)]{AAZ92}, $x$ is prime in $D$. Therefore  $D$ is an AP-domain.
 \smallskip
 
The PDF case is similar to the IDF case; see \cite[Theorem~3.1]{AAZ92} and \cite[Theorem~2.4(a)]{AAZ92}.  
\end{proof}
\begin{Co}
	Let $D$ be an integral domain, and let $S$ be a splitting multiplicative subset of $D$ generated by primes. Then $D$ is a strong Furstenberg (resp., TPDF-domain)  if and only if $D_S$ is a strong Furstenberg (resp., TPDF-domain).	 
\end{Co}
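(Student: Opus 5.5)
By Proposition~\ref{P1}, a domain is strong Furstenberg exactly when it is a Furstenberg AP-domain. By Proposition~\ref{proAP}, the AP and PDF properties pass both ways between $D$ and $D_S$. So I will reduce the statement to showing that $D$ is Furstenberg if and only if $D_S$ is Furstenberg, for $S$ a splitting set generated by primes. The TPDF case then follows by combining the strong Furstenberg case with the PDF part of Proposition~\ref{proAP}, since TPDF means strong Furstenberg and PDF.

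Rather than pass through the Furstenberg property, I will argue directly with prime divisors, which is cleaner here. Throughout I use the normal form from the proof of Proposition~\ref{proAP}: every $r\in D$ can be written as $r=as$ with $s\in S$ and $a\in D$ not divisible by any prime of $S$, and with $aD\cap tD=atD$ for all $t\in S$. I also use \cite[Corollary~1.4(c)]{AAZ92}: for such $a$, primality in $D$ and in $D_S$ coincide. Units of $D_S$ are of the form $u/s$ with $u\in U(D)$ and $s\in S$, by saturation.

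\emph{($\Rightarrow$)} Suppose $D$ is strong Furstenberg, and let $x\in D_S$ be a nonzero nonunit. After multiplying by a unit, I may take $x=a\in D$ in normal form; $a$ is a nonunit of $D$, since otherwise $x$ would be a unit. Let $p$ be a prime divisor of $a$ in $D$. Then $p\notin S$, because no prime of $S$ divides $a$. Saturation of $S$ shows that $p$ is not divisible by any element of $S$. Hence $p$ satisfies the normal-form condition and is prime in $D_S$. It still divides $x$ there, and it is a nonunit of $D_S$ because $p\notin S$.

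\emph{($\Leftarrow$)} Suppose $D_S$ is strong Furstenberg, and let $r\in D$ be a nonzero nonunit. Write $r=as$. If $s$ is a nonunit, then $s$ is divisible by a prime of $S$, since $S$ is generated by primes, and that prime divides $r$. Otherwise $a\sim r$ is a nonunit of $D$. In that case $a$ is also a nonunit of $D_S$: if it were a unit, then $a$ would divide an element of $S$ and so lie in $S$, forcing $a$ to be a unit in $D$. Take a prime $q$ of $D_S$ dividing $a$, normalized as $q\in D$ with $q$ prime in $D$ by \cite[Corollary~1.4(c)]{AAZ92}. The relation $a=q\cdot(b/t)$ gives $at=qb$, so $q\mid at$ in $D$. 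Since $q$ is prime, $q\mid a$ or $q\mid t$. The latter is impossible: $q$ divides no element of $S$, because $q\notin S$ (it is a nonunit of $D_S$) and $q$ is prime.

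The main obstacle is the bookkeeping of normal forms and units, in particular checking that the normalized prime divisor is still a nonunit in the other ring. This is exactly where saturation of $S$ and generation by primes are used. Once that is settled, the TPDF statement follows by combining the above with the PDF part of Proposition~\ref{proAP}.
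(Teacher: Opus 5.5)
Your argument is correct, but the route you actually carry out differs from the paper's. Your opening paragraph sketches essentially the paper's plan, which you then drop.

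\textbf{The paper's route.} It never handles prime divisors directly. It writes strong Furstenberg as Furstenberg plus AP (Proposition~\ref{P1}), takes the transfer of the Furstenberg property between $D$ and $D_S$ from the proof of \cite[Proposition~6.4]{GZ22}, and uses the AP half of Proposition~\ref{proAP}. For TPDF it writes TPDF as AP plus TIDF (Corollary~\ref{pr1}) and combines the AP transfer with the TIDF transfer of \cite[Proposition~6.4]{GZ22}.

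\textbf{Your route.} You move a prime divisor itself across $D\subseteq D_S$, using only saturation, generation by primes, and the prime correspondence of \cite[Corollary~1.4(c)]{AAZ92}. For TPDF you need only the PDF half of Proposition~\ref{proAP}.

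\textbf{Comparison.} Your version is more self-contained. It avoids both the detour through irreducibles and the rather indirect appeal to the proof of a result about the IDF property. The paper's version is shorter and modular, reusing transfer results for AP and TIDF.

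\textbf{Slips to fix.}
\begin{itemize}
\item For saturated $S$, the units of $D_S$ are the quotients $t/s$ with $t,s\in S$, not $u/s$ with $u\in U(D)$. For instance, $2$ is a unit of $\mathbb{Z}_S$ when $S=\{\pm 2^k : k\geq 0\}$. This does no harm: whenever you actually test whether some $y\in D$ is a unit of $D_S$, you use the correct criterion $y\in S$.
\item ``Not divisible by any element of $S$'' should say ``nonunit element''.
\item It is worth one line to justify that $p$ is in normal form. For $S$ generated by primes, $aD\cap tD=atD$ for all $t\in S$ holds exactly when no prime of $S$ divides $a$. Alternatively, note that $pD\cap S=\emptyset$, so $pD_S$ is a proper prime ideal.
\end{itemize}
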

\begin{proof}
By the proof of \cite[Proposition~6.4]{GZ22},  $D$ is a Furstenberg domain if and only if $D_S$ is a Furstenberg domain. On the other hand, a strong Furstenberg domain is a Furstenberg AP-domain  (see Propositions \ref{P1}). Then it follows from Proposition \ref{proAP} that $D$ is a strong Furstenberg domain if and only if $D_S$ is a strong Furstenberg domain. The TPDF-domain case follows from Proposition  \ref{proAP} and \cite[Proposition~6.4]{GZ22} since  a TPDF-domain is an AP TIDF-domain by Corollary \ref{pr1}. 
\end{proof}

We conclude this paper with the following proposition, which provides a class of examples of TPDF-domains and shows that such domains can be constructed with a prescribed number of primes, even when unique factorization fails.

\begin{Pro} \label{Pf}
    For every $n \in \N $, there exists a TPDF-domain that is not a UFD and has exactly $n$ prime elements up to associates.
\end{Pro}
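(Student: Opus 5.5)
Use the $D+M$ construction of Theorem~\ref{tN} together with Remark~\ref{r48}, taking the base ring to be a semilocal PID with exactly $n$ maximal ideals. Concretely, fix distinct rational primes $p_1,\dots,p_n$ and let $S=\mathbb{Z}\setminus (p_1\mathbb{Z}\cup\cdots\cup p_n\mathbb{Z})$. Set $D=\mathbb{Z}_S=S^{-1}\mathbb{Z}$, which is a PID (hence a UFD) that is not a field. Its prime elements up to associates are exactly $p_1,\dots,p_n$. Thus $D$ is a TPDF-domain and $\mathcal{P}(D)$ is finite, of cardinality $n$. Take $T=\mathbb{R}\ldb X\rdb=\mathbb{R}+X\mathbb{R}\ldb X\rdb$, where $K=\mathbb{R}$ and $M=X\mathbb{R}\ldb X\rdb$, and put $R=D+M$. (One could equally use $L\ldb X\rdb$ for any field $L\supseteq\mathbb{Q}$, or $\mathbb{Q}\ldb X\rdb$.)

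The steps are as follows.
\begin{enumerate}
\item $T$ is a DVR, hence a quasilocal UFD, and in particular a TPDF AP-domain. By Remark~\ref{r48}(4), or directly by the corollary following it for $R'=D+XL\ldb X\rdb$, $R$ is a TPDF-domain.
\item Count the primes of $R$. By Lemma~\ref{dr}(2), each $p_i$ is prime in $R$. Elements of $M$ are not irreducible in $R$, since $m=p_1(p_1^{-1}m)$, so they are not prime. Any element $d+m$ with $d\neq 0$ equals $d(1+d^{-1}m)$, and $1+d^{-1}m$ is a unit of $R$ because $T$ is quasilocal (as noted in Remark~\ref{r48}). Hence every prime of $R$ is associate to a prime of $D$. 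Since $U(D)=D\cap U(R)$, the $p_i$ remain pairwise nonassociate. So $R$ has exactly $n$ primes up to associates.
\item $R$ is not a UFD. By Corollary~\ref{pr}, it suffices to show that $R$ is not atomic, or that $\bigcap_k p_1^kR\neq 0$. The latter is immediate: $X=p_1^k(p_1^{-k}X)$ with $p_1^{-k}X\in M$, so $0\neq X\in\bigcap_k p_1^kR$. Therefore condition (4) of Corollary~\ref{pr} fails, and $R$ is not a UFD. Equivalently, $X$ has no factorization into atoms, since by step 2 the atoms of $R$ are exactly the associates of the $p_i$.
\end{enumerate}

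The main care point is step 2: we must make sure that no prime of the form $1+m$ appears, and that $M$ contributes nothing. The quasilocality of $T$ settles the first, and the non-field hypothesis on $D$ settles the second. The case $n=0$ is excluded, since $\mathbb{N}$ denotes the positive integers. If it were wanted, one would use a field $D$ with $T$ antimatter via Corollary~\ref{fd}, but that is not needed here.
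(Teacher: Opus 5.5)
Your proof is correct and is essentially the paper's. Both use the same semilocal $D=\mathbb{Z}_S$ inside a power series ring, obtain TPDF from Remark~\ref{r48}(4), and rest on $d+m\sim d$ together with $p_1$ dividing every element of $M$. The differences are cosmetic: you count primes directly and detect the failure of UFD via $X\in\bigcap_k p_1^kR$ and Corollary~\ref{pr}, whereas the paper counts atoms, invokes AP, and cites non-atomicity; using $\mathbb{R}$ instead of $\mathbb{Q}$ is immaterial.

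Your closing aside about $n=0$ is wrong, though it does not affect the proof. A TPDF-domain with no primes has no nonzero nonunits, so it is a field and hence a UFD. Moreover, an antimatter $T\neq K$ is not TPDF, so Corollary~\ref{fd} cannot produce such an example.
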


\begin{proof}
Consider a UFD $D$ that is not a field with exactly $n$ nonassociate prime elements $\{p_1,\ldots,p_n\}$. 
For instance, let $p_1, p_2, \dots, p_n$ be distinct prime numbers in $\mathbb{Z}$ and define the multiplicative set
\[
S = \left\{ m \in \mathbb{Z}\setminus\{0\} : \gcd\!\left(m,\prod_{i=1}^{n} p_i\right)=1 \right\}.
\]
Now consider the localization $D=\mathbb{Z}_S$. Then $D$ is a subring of $\mathbb{Q}$ and hence an integral domain. Moreover, the group of units of $D$ is $U(D)=S$. It follows from \cite[Corollary~2.2]{AAZ92} that $D$ is a UFD. Furthermore, for each $i=1,\dots,n$, the element $p_i$ is not inverted in the localization and therefore remains a prime element of $D$. On the other hand, if $q$ is a prime number different from $p_1,\dots,p_n$, then $q\in S$ and hence becomes a unit in $D$. Consequently, the only prime elements of $D$, up to associates, are $p_1,\dots,p_n$.

Let $K$ be the quotient field of $D$ and set $R := D + XK[[X]]$. Since $T := K[[X]]$ is a UFD and a quasilocal domain with maximal ideal $M:=(X)$, it follows from Remark~\ref{r48}(4) that $R$ is a TPDF-domain. However, $D$ is not a field, and hence $R$ is not atomic  by \cite[Proposition~1.2]{AAZ90}. To conclude our proof, we need to show that $R$ has exactly $n$ nonassociate  irreducible (prime) elements, namely $p_1, \dots, p_n$. By Lemma~\ref{dr}(1), each $p_i\in \mathcal{A}(R)$, for $i=1,\ldots,n$.   On the other hand,  by  \cite[Lemma~4.17]{AG20},  $U(R) = U(D) + X K\ldb  X \rdb$. Then it is easy to see that every nonunit of $M$ is divisible by every $p_i$ in $R$, and  in particular $\mathcal{A}(R) \cap M = \emptyset$.   Therefore, every irreducible element of $R$ is of the form $r=d+m$ where $0 \neq d\in D\setminus U(D)$ and $m\in M$. Since $r=d(1+d^{-1}m)$ with $1+d^{-1}m\in U(R)$, $r$ is irreducible in $R$ if and only if $d$ is irreducible in $D$. Consequently,
$$\mathcal{A}(R) = \left\{ up_i : u\in U(R) \textit{ and }i=1,\ldots,n \right\}.
$$
Since $R$ is an AP-domain, the result follows. 
\end{proof}


\begin{thebibliography}{99}

\bibitem{AAZ90} D.~D. Anderson, D.~F. Anderson, and M.~Zafrullah, \emph{Factorization in integral domains}, J. Pure Appl. Algebra \textbf{69} (1990) 1--19.

\bibitem{AAZ92} D. D. Anderson, D. F. Anderson, and M. Zafrullah, \emph{Factorization in integral domains II}, J. Algebra \textbf{152} (1992) 78--93.


\bibitem{AZ07} D. D. Anderson and M. Zafrullah, \textit{The Schreier property and Gauss Lemma}, Bollettino U. M. I.,  Series 8,  \textbf{10-B} (2007) 43--62.

\bibitem{AG20} D.F. Anderson and F. Gotti, \emph{Bounded and finite factorization domains}, in The International Conference on Mathematics and Statistics, Springer, Singapore, 2020, pp. 7-57.


\bibitem{AS75} J. T. Arnold and P. B. Sheldon, \emph{Integral domains that satisfy Gauss’s lemma}, Michigan Math. J. \textbf{22} (1975) 39--51.

\bibitem{BEGraz} M. Benelmekki and  S. El Baghdadi, When is a group algebra  antimatter?, in  \emph{ Algebraic, Number Theoretic, and Topological Aspects of Ring Theory}, eds. J.L. Chabert et al.,  Springer, 2023, pp. 87--98. 






\bibitem{BDKN} A. Bodin, P. Debès, J. König, and S. Najib, \emph{The Hilbert-Schinzel specialization property}, J. Reine Angew. Math. \textbf{785} (2022) 55--79.

\bibitem{Cl17} P.L. Clark, \emph{The Euclidean criterion for irreducibles}, Am. Math. Mon. \textbf{124} (2017) 198--216.


\bibitem{C68} P. M. Cohn, \emph{Bezout rings and their subrings}, Proc. Cambridge Philos. Soc. \textbf{64} (1968) 251--264.

\bibitem{CDM99}  J. Coykendall, D. E. Dobbs, and B. Mullins, \emph{On integral domains with no atoms},
Comm. Alg. \textbf{27} (12) (1999) 5813--5831.


\bibitem{CMZ} D. Costa, J. L. Mott, and M. Zafrullah, \emph{Overrings and dimensions of general D + M constructions}, J. Natur. Sci.Math. \textbf{26} (1986) 7--14.



\bibitem{G84} R. Gilmer, \emph{Commutative Semigroup Rings}, Univ. of Chicago Press, Chicago, 1984.

\bibitem{GP74} R. Gilmer and T. Parker, \emph{Divisibility properties in semigroup rings}, Michigan Math. J., \textbf{21} (1974) 65--86

\bibitem{GZ22} F. Gotti and M. Zafrullah, \emph{Integral domains and the IDF property}, J. Algebra \textbf{614 }(2023) 564--591. 

	\bibitem{GW75} A. Grams and H. Warner, \emph{Irreducible divisors in domains of finite character}, Duke Math. J. \textbf{42} (1975) 271--284.
	

\bibitem{MO} P. Malcolmson and F. Okoh, \emph{Expansions of Prime Ideals}, Rocky Mt. J. Math. \textbf{35} (2005) 1689-1706.
 

\bibitem{MR78} S. McAdam and D. E. Rush, \emph{Schreier rings}, Bull. London Math. Soc. no. \textbf{10}
(1978) 77--80.



\bibitem{Sh71} P. B. Sheldon, \textit{How changing $D[[X]]$ changes its quotient field}, Trans. Amer. Math. Soc. \textbf{159}(1971) 223--244.

\bibitem{Tang} H. T. Tang, \emph{Gauss’ Lemma}, Proc. Amer. Math. Soc. \textbf{35} (1972) 372--376.

\end{thebibliography}
\end{document}